\newtheorem{theorem}{Theorem}[section]
\newtheorem{definition}[theorem]{Definition}
\newtheorem{lemma}[theorem]{Lemma}
\newtheorem{proposition}[theorem]{Proposition}
\newtheorem{remark}[theorem]{Remark}
\numberwithin{equation}{section}
\newcommand*\patchAmsMathEnvironmentForLineno[1]{%
  \expandafter\let\csname old#1\expandafter\endcsname\csname #1\endcsname
  \expandafter\let\csname oldend#1\expandafter\endcsname\csname end#1\endcsname
  \renewenvironment{#1}%
     {\linenomath\csname old#1\endcsname}%
     {\csname oldend#1\endcsname\endlinenomath}}%
\newcommand*\patchBothAmsMathEnvironmentsForLineno[1]{%
  \patchAmsMathEnvironmentForLineno{#1}%
  \patchAmsMathEnvironmentForLineno{#1*}}%
\title{\bf Deforming Locally Convex Curves into Curves of Constant $k$-order Width}
\author{\ {\bf Laiyuan Gao ~~~~Horst Martini ~~~~Deyan Zhang\thanks{The corresponding author.}} }
\date{}
\begin{document}
\maketitle

\noindent {\bf Abstract} A nonlocal curvature flow is introduced to evolve locally convex curves in the plane.
It is proved that this flow with any initial locally convex curve has a global solution, keeping the local convexity and the elastic energy of the evolving curve,
and that, as the time goes to infinity, the curve converges to a smooth, locally convex curve of constant $k$-order width.
In particular, the limiting curve is a multiple circle if and only if the initial locally convex curve is $k$-symmetric.\\

\noindent {\bf Keywords} curvature flow, locally convex curve, curves of constant $k$-order width, Blaschke-Lebesgue problem.

\noindent {\bf Mathematics Subject Classification (2020) }  {52A10, 53A04, 53E10, 35K15}

\baselineskip 15pt

\section{Introduction}\label{s1}
\setcounter{equation}{0}
Let $X_0: S^1 \rightarrow \mathbb{E}^2$ be a $C^2$, immersed and closed curve in the Euclidean plane. If its relative curvature $\kappa$ is positive everywhere,
then $X_0$ is called a \emph{locally convex curve}. If $X_0$ is also embedded, then it is called a \emph{convex curve}.

In this paper a new curvature flow is established to evolve locally convex curves into curves of constant $k$-order width. This work is motivated by the following series of studies.
Let $X: S^1\times [0, \omega) \rightarrow \mathbb{E}^2$ be a family of smooth and locally convex curves in the plane, with $s$ and $\theta$ denoting the arc length parameter
and the tangent angle, respectively. Since $\frac{d\theta}{ds}$ equals the curvature $\kappa(s)>0$ for all $s$, the angle $\theta$ can be used as a parameter. For every $\theta$,
$p(\theta, t)= -\langle X(\theta, t), N(\theta, t)\rangle$ is called the value of the \emph{support function}, where $N(\theta, t)$ is the unit normal.
Gao and Pan studied in \cite{Gao2014} a curvature flow for convex curves given by
\begin{equation}\label{eq:1.1.202302}
\left\{
\begin{aligned}
&\frac{\partial X}{\partial t}(\theta, t)=\left(w(\theta, t)-\eta(\theta, t)\right)N(\theta, t),
\\
&X(\theta, 0)=X_0(\theta), \quad  (\theta, t)\in [0, 2\pi] \times [0, T_{\max}),
\end{aligned}
\right.
\end{equation}
where $w(\theta, t)=p(\theta, t)+p(\theta+\pi, t)$ is the width function; $\eta(\theta, t) = \rho(\theta, t)+\rho(\theta+\pi, t)$ and $\rho(\theta, t) = \frac{1}{\kappa(\theta, t)}$
is the radius of curvature.
They proved that this flow drives the evolving curve to a limiting convex curve of constant width, if the
initial curve satisfies a $1/3$ curvature pinching condition. Later, this result was generalized by Gao and Zhang \cite{Gao-Zhang-2017} for the evolution
of convex hypersurfaces in higher dimensional Euclidean space.
Another generalized model was presented by Fang in the paper \cite{Fang2017}. He replaced $w$ and $\eta$ in the equation (\ref{eq:1.1.202302}) by the \emph{$k$-order width function} $w_k(\theta)=p(\theta)+p\left(\theta+\frac{2\pi}{k}\right)+\cdots+p\left(\theta+\frac{2(k-1)\pi}{k}\right)$
and $\eta_k = \rho(\theta)+ \rho\left(\theta+\frac{2\pi}{k}\right)+\cdots+ \rho\left(\theta+\frac{2(k-1)\pi}{k}\right)$, respectively,
where $k\geq 2$ is a positive integer. He proved that under a $\frac{2k-1}{k-1}$ curvature pinching condition the curvature flow deforms an initial convex curve
into a limiting curve of constant $k$-order width.

To guarantee the global existence for the above curvature flows, some curvature pinching condition of the initial curve or hypersurface is needed. So a natural
question is whether one can construct a proper curvature flow which evolves every initial curve globally and drives the evolving curve into the
limiting curve. To settle this problem, we consider in this paper a new curvature flow of locally convex curves.
Let $X_0$ be a smooth, closed and locally convex planar curve parameterized by the tangent angle $\theta$.
Denote by $m$ the \emph{winding number} of $X_0$. It equals the total curvature divided by $2\pi$, i.e., $m= \frac{1}{2\pi}\int_{X_0} \kappa(s)ds$.
For the sake of brevity, we write the \emph{elastic energy} of the curve (see \cite{Gao-2023} and \cite{Singer-2008}) as the integral
\begin{eqnarray*}
E(X_0):= \int_0^{L_0} (\kappa_0(s))^2 ds.
\end{eqnarray*}
Now we consider a curvature evolution problem for locally convex curves, namely
\begin{equation}\label{eq:1.2.202302}
\left\{
\begin{aligned}
\frac{\partial X}{\partial t}(\theta, t)=\left(2w_k(\theta, t)-\rho_k(\theta, t)+f(t)\right)N(\theta, t),&\\
X(\theta, 0)=X_0(\theta), \quad  (\theta, t)\in [0, 2m\pi] \times [0, T_{\max}),
\end{aligned}
\right.
\end{equation}
where
\begin{eqnarray} \label{eq:1.3.202302}
w_k(\theta)= \sum_{i=0}^{k-1} p\left(\theta+\frac{2im\pi}{k}\right)~ \text{and} ~\rho_k(\theta)= \sum_{i=0}^{k-1} \rho\left(\theta+\frac{2im\pi}{k}\right),
\end{eqnarray}
and the nonlocal term is defined by
\begin{eqnarray} \label{eq:1.4.202302}
f(t)=\frac{\int_0^{2m\pi}\kappa^2\frac{\partial^2\rho_k}{\partial \theta^2}\text{d}\theta
-\int_0^{2m\pi}\kappa^2\rho_k\text{d}\theta}{\int_0^{2m\pi}\kappa^2\text{d}\theta}.
\end{eqnarray}
Our main theorem is the following statement.
\begin{theorem}\label{thm:1.1.202302}
Let $X_0$ be a smooth, closed and locally convex planar curve. The flow \eqref{eq:1.2.202302} has a global solution and
keeps both the local convexity and the elastic energy of the evolving curve. As time goes to infinity, the curve $X(\cdot, t)$
converges smoothly to a locally convex curve of constant $k$-order width.
In particular, the limiting curve is a multiple circle if and only if the initial curve is $k$-symmetric.
\end{theorem}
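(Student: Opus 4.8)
The plan is to recast the geometric flow \eqref{eq:1.2.202302} as a scalar equation for the support function and then exploit an almost complete decoupling in Fourier modes. Using the classical identity $\rho = p + p_{\theta\theta}$ we have $\rho_k = w_k + w_{k,\theta\theta}$, so the normal speed rewrites as $2w_k - \rho_k + f = w_k - w_{k,\theta\theta} + f$. In the tangent-angle gauge the inner unit normal at the point of angle $\theta$ does not depend on $t$, hence $p_t = -\langle X_t, N\rangle = -(w_k - w_{k,\theta\theta} + f)$, that is
\[ p_t = w_{k,\theta\theta} - w_k - f. \]
Since $w_k(\theta) = \sum_{i=0}^{k-1} p(\theta + \tfrac{2im\pi}{k})$ is invariant under $\theta \mapsto \theta + \tfrac{2m\pi}{k}$, summing over the $k$ shifted angles produces the linear parabolic equations $w_{k,t} = k(w_{k,\theta\theta} - w_k - f)$ and, after applying $\partial_\theta^2 + \mathrm{id}$, $\rho_{k,t} = k(\rho_{k,\theta\theta} - \rho_k - f)$. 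Short-time existence then follows from standard parabolic theory.

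Writing $p = \sum_n \hat p_n(t)\, e^{in\theta/m}$, the map $p \mapsto w_k$ is the projection $w_k = k\sum_{k\mid n}\hat p_n\, e^{in\theta/m}$, so the right-hand side of the $p$-equation involves only modes with $k\mid n$, while $f$, being constant in $\theta$, affects only the zeroth mode. Consequently every mode with $k\nmid n$ is frozen, $\hat p_{n,t}=0$; every mode with $k\mid n$ and $n\neq 0$ decays like $e^{-k((n/m)^2+1)t}$; and only the mean $\hat p_0$ satisfies a nontrivial (nonlocal, through $f$) evolution. In particular the non-mean part of $\rho$ is an explicit Fourier series that is bounded uniformly in $t$ in every $C^j$ norm, contracting exponentially to a fixed smooth function.

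Next I would prove conservation of the elastic energy and use it to preserve local convexity. Since $E = \int_0^{2m\pi}\kappa\,d\theta$ and $\kappa_t = -\kappa^2\rho_t$, we get $\dot E = -\int_0^{2m\pi}\kappa^2(\rho_{k,\theta\theta} - \rho_k - f)\,d\theta$, which vanishes by the very definition \eqref{eq:1.4.202302} of $f$; this establishes the energy-preserving claim. Conservation of $E$ is then the engine that keeps $\rho>0$: by Jensen's inequality $E=\int_0^{2m\pi}\rho^{-1}d\theta \ge (2m\pi)^2/L$, so the length $L$, hence the mean $\hat p_0 = L/(2m\pi)$, stays bounded below. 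More decisively, if $\rho_{\min}(t)$ approached $0$, then since $\rho_{\theta\theta}$ is uniformly bounded we would have $\rho \le \rho_{\min} + C(\theta-\theta_*)^2$ near a minimizer, forcing $E = \int \rho^{-1}d\theta \gtrsim \rho_{\min}^{-1/2}\to\infty$, a contradiction. Hence $\rho \ge c_0 > 0$ for all time: local convexity is preserved, and together with the uniform derivative bounds this upgrades the solution to a global one.

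Finally I would pass to the limit. The non-mean modes converge by the explicit formulas above; for the mean, conservation of $E$ yields the implicit relation $E = \int_0^{2m\pi} (\hat p_0(t) + g(\theta,t))^{-1} d\theta$, where $g$ is the non-mean part of $\rho$, which converges uniformly to its frozen limit $r$. As the integrand is strictly monotone in $\hat p_0$, this forces $\hat p_0(t)\to\hat p_0^\infty$, the unique root of $E=\int (\hat p_0^\infty + r)^{-1}d\theta$. Thus $p(\cdot,t)\to p_\infty$ smoothly, and since all non-constant modes of $w_k$ vanish in the limit, $w_k \to k\hat p_0^\infty = \mathrm{const}$: the limit curve has constant $k$-order width. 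The limit is a multiple circle precisely when $\rho_\infty$ is constant, i.e. when every frozen mode $\hat p_n(0)$ with $k\nmid n$, $n\neq\pm m$ vanishes (the modes $n=\pm m$ are pure translations and never enter $\rho$); up to translation this is exactly the $k$-symmetry of $X_0$. I expect the preservation of convexity to be the main obstacle: the $\rho$-equation is nonlocal, so the scalar maximum principle does not apply, and it is the engineered conservation of elastic energy—via the blow-up of $\int\rho^{-1}d\theta$ at an incipient curvature singularity—that supplies the missing lower bound.
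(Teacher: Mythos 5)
Your proposal is correct in substance. It shares the paper's backbone — the tangent-angle gauge, the equations $p_t = w_{k,\theta\theta}-w_k-f$ and $w_{k,t}=k(w_{k,\theta\theta}-w_k-f)$, energy conservation built into the definition \eqref{eq:1.4.202302} of $f$, and the Fourier splitting into frozen modes ($k\nmid n$), exponentially decaying modes ($k\mid n$, $n\neq 0$) and the mean — but it departs from the paper at three genuine points, each defensibly. First, for preservation of local convexity the paper proves a gradient bound $|\rho_\theta|\le M_1$ by a maximum-principle argument (Lemma~\ref{lem:3.1.202302}) and then combines the log-oscillation estimate $\ln\rho_{\max}-\ln\rho_{\min}\le M_1E$ with $2m\pi/\rho_{\max}\le E\le 2m\pi/\rho_{\min}$ (Lemma~\ref{lem:3.3.202302}); you instead Taylor-expand at a minimizer of $\rho$ using the uniform bound on $\rho_{\theta\theta}$ (supplied by your explicit Fourier series) to force $E\gtrsim \rho_{\min}^{-1/2}$, so conservation of $E$ forbids $\rho_{\min}\to 0$. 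Both work; yours trades the maximum principle for explicit mode bounds. Second, for convergence the paper invokes Arzel\`{a}--Ascoli and shows that two subsequential limits of $\rho$ differ by a constant which energy conservation kills (Theorem~\ref{thm:4.5.202302}); you pin down the single unknown scalar $\hat p_0(t)$ directly through the strictly monotone relation $E=\int(\hat p_0+g)^{-1}\,d\theta$, which is cleaner and needs no compactness. Third, your treatment of the circle case correctly isolates the translation modes $n=\pm m$ (which never enter $\rho$) and states the equivalence with $k$-symmetry ``up to translation''; the paper's Theorem~\ref{thm:4.6.202302} implicitly takes the limiting circle centered at the origin, so your version is in fact slightly more careful.

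The one thin spot is the sentence ``short-time existence then follows from standard parabolic theory.'' The scalar $p$-equation is not strictly parabolic: the principal operator $p\mapsto \partial_\theta^2 w_k$ has Fourier symbol $-k(n/m)^2$ on modes with $k\mid n$ and $0$ on all other modes, so it is degenerate, and the nonlocal term $f$ couples all modes through $\kappa=1/\rho$. The paper closes this step (Lemma~\ref{lem:2.2.201909}) by differentiating the $\rho_k$-equation so that $f$ drops out, solving the resulting genuine linear heat equation for $\partial_\theta\rho_k$, and recovering the missing $\theta$-independent function $\lambda(t)$ from the conserved quantity $\int\rho^{-1}d\theta$ — which is precisely the rigorous form of your remark that ``only the mean satisfies a nontrivial evolution'' and of your later use of energy conservation to determine it. So the idea needed is already in your proposal; you just need to carry it out rather than cite standard theory, i.e.\ verify that the mode-by-mode solution, with the mean defined through the conservation identity, really solves the original flow with $f$ given by \eqref{eq:1.4.202302}.
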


Since some locally convex curves appear as self-similar solutions \cite{Abresch-Langer-1986, Halldorsson-2012} to the classical Curve Shortening Flow,
it is quite natural to consider curvature flows for these curves.
During the last years, Xiaoliu Wang and his collaborators did some important research on this subject, see \cite{Wang-Kong-2014, Wang-Li-Chao-2017, Wang-Wo-Yang-2018}.
For more theories and applications of curvature flows of curves, one should also consult the monograph \cite{Chou-Zhu-2001} and suitable references therein.

\begin{remark}\label{remk:1.2.202302}
Comparing with models in the papers by Gao-Pan \cite{Gao2014}, Gao-Zhang \cite{Gao-Zhang-2017} and Fang \cite{Fang2017},
a complicated nonlocal term $f(t)$ is used in the flow \eqref{eq:1.2.202302} with the aim to preserve the elastic energy of $X(\cdot, t)$.
This property guarantees the global existence of the flow. This term is motivated by the first author's recent work \cite{Gao-2023}, where he
introduces a new curvature flow to answer Yau's problem of evolving one curve to another in the case of locally convex curves.
\end{remark}

\begin{remark}\label{remk:1.3.202302}
The original goal of this paper was to understand convex domains of (k-order) constant width via curvature flows.
In fact, convex curves (or convex domains) of constant width and higher dimensional analogues are of special interest in geometry.
As far as we know, the famous related Blaschke-Lebesgue problem \cite{Anciaux-Georgiou-2009, Anciaux-Guilfoyle-2011, B-L-O} for dimension $n\geq 3$ is still open.
One may consult the monograph \cite{Martini-Montejano-Oliveros} for more results on related topics.
\end{remark}

This paper is organized as follows. In Section 2, short-time existence of the flow \eqref{eq:1.2.202302} is proved. In Section 3, global existence
is obtained. And in Section 4, we prove convergence and the main theorem.

%
%
%

\section{Short-time existence}\label{s2}
Suppose $X : S^1\times [0,T)\rightarrow \mathbb{E}^2$ is a family of smooth, closed and locally convex curves in the plane evolving according
to the flow \eqref{eq:1.2.202302}. Usually, the tangent angle $\theta=\theta(s, t)$ varies as time goes.
As experts did in previous studies (see Proposition 1.1 in the paper \cite{Chou-Zhu-2001}), we consider the next flow instead of \eqref{eq:1.2.202302} such that $\theta$ is a variable independent of time $t$:
\begin{equation}\label{eq:2.1.202302}
\left\{
\begin{aligned}
&\frac{\partial \tilde{X}}{\partial t}=\alpha (\theta, t)T(\theta, t)+\left(2\omega_k(\theta, t)-\rho_k(\theta, t)+f(t)\right)N(\theta, t),
\\
&\tilde{X}(\theta, 0)=X_0(\theta), \quad  (\theta, t)\in [0, 2m\pi] \times [0, T_{\max}),
\end{aligned}
\right.
\end{equation}
where $\alpha$ is given by
\begin{eqnarray*}
\alpha=-2\frac{\partial w_k}{\partial \theta}+\frac{\partial \rho_k}{\partial \theta}.
\end{eqnarray*}
It follows from Proposition 1.1 in \cite{Chou-Zhu-2001} that the solutions to \eqref{eq:2.1.202302} and \eqref{eq:1.2.202302} are the same except altering the parametrization.
So the short-time existence of the flow \eqref{eq:1.2.202302} is equivalent to that of \eqref{eq:2.1.202302}.

Both the equations \eqref{eq:1.2.202302} and \eqref{eq:2.1.202302} are fully non-linear parabolic equation systems. The main idea of the proof for short-time existence is to reduce
these complicated equations to a semi-linear system of the evolution equation of $p$ and $\rho_k$.

Since the Frenet frame can be expressed as
\begin{eqnarray*}
T=(\cos\theta, \sin\theta), \quad N=(-\sin\theta, \cos\theta),
\end{eqnarray*}
one gets the \emph{Frenet formulae}
\begin{eqnarray*}
\frac{\partial T}{\partial\theta}=N, \quad \frac{\partial N}{\partial\theta}=-T.
\end{eqnarray*}
Set $\beta(\theta, t) = 2w_k(\theta, t)-\rho_k(\theta, t)+f(t)$.
Applying the equations (1.14)-(1.17) in the book \cite{Chou-Zhu-2001}, one obtains from \eqref{eq:2.1.202302},
\begin{eqnarray}
&& \frac{\partial T}{\partial t} = \left(\alpha \kappa + \frac{\partial \beta}{\partial s} \right)N
     = \left(\alpha + \frac{\partial \beta}{\partial \theta} \right) \kappa N, \label{eq:2.2.202302}
\\
&& \frac{\partial N}{\partial t} = -\left(\alpha \kappa + \frac{\partial \beta}{\partial s} \right)T
     = -\left(\alpha + \frac{\partial \beta}{\partial \theta} \right) \kappa T, \label{eq:2.3.202302}
\\
&& \frac{\partial \theta}{\partial t} = \alpha \kappa + \frac{\partial \beta}{\partial s}
     = \left(\alpha + \frac{\partial \beta}{\partial \theta} \right) \kappa, \label{eq:2.4.202302}
\\
&& \frac{\partial \kappa}{\partial t} = \kappa^2\left(\frac{\partial^2 \beta}{\partial \theta^2} + \beta\right). \label{eq:2.5.202302}
\end{eqnarray}
By the choice of $\alpha$, both the Frenet frame $\{T, N\}$ and the tangent angle $\theta$ are independent of the time:
\begin{eqnarray}\label{eq:2.6.202302}
\frac{\partial T}{\partial t} \equiv 0,~~ \frac{\partial N}{\partial t} \equiv 0,~~ \frac{\partial \theta}{\partial t} \equiv 0.
\end{eqnarray}
So the support function satisfies
\begin{eqnarray*}
\frac{\partial p}{\partial t}=-\frac{\partial }{\partial t}\left\langle X, N\right\rangle = -(2w_k-\rho_k+f(t))=\rho_k-2w_k-f(t).
\end{eqnarray*}
Since
\begin{eqnarray*}
\frac{\partial p}{\partial\theta}=-\left\langle \frac{\partial X}{\partial\theta}, N\right\rangle -\left\langle X, \frac{\partial N}{\partial\theta}\right\rangle=\langle X, T\rangle,
\end{eqnarray*}
we have
\begin{eqnarray*}
\frac{\partial^2 p}{\partial\theta^2} = \left\langle \frac{\partial X}{\partial s} \frac{\partial s}{\partial\theta}, T\right\rangle = \rho -p.
\end{eqnarray*}
So one obtains
\begin{equation}\label{eq:2.7.202302}
\rho =\frac{\partial^2 p}{\partial\theta^2}+p
\end{equation}
and
\begin{equation}\label{eq:2.8.202302}
\rho_k =\frac{\partial^2 w_k}{\partial\theta^2}+ w_k.
\end{equation}
Thus, the radius of curvature satisfies
\begin{eqnarray}
\frac{\partial \rho}{\partial t}&=&\frac{\partial }{\partial t}(p+p_{\theta\theta})
\nonumber\\
&=&\frac{\partial p}{\partial t}+\frac{\partial^2}{\partial\theta^2}\left(\frac{\partial p}{\partial t}\right)
\nonumber\\
&=&\rho_k-2w_k-f(t)+\frac{\partial^2}{\partial\theta^2}\left( \rho_k-2w_k-f(t)  \right)
\nonumber\\
&=&\frac{\partial^2\rho_k}{\partial\theta^2}-\rho_k-f(t), \label{eq:2.9.202302}
\end{eqnarray}
and one also has the evolution equation of the $k$-width function:
\begin{equation}\label{eq:2.10.202302}
\frac{\partial w_k}{\partial t}=-k(2w_k-\rho_k+f(t)) =k\left(\frac{\partial^2 w_k}{\partial \theta^2}-w_k-f(t)\right).
\end{equation}
Combining (\ref{eq:2.8.202302}) and (\ref{eq:2.10.202302}), one immediately obtains the evolution equation of $\rho_k$:
\begin{eqnarray}\label{eq:2.11.202302}
\frac{\partial \rho_k}{\partial t}(\theta, t)&=& k\left(\frac{\partial^2 \rho_k}{\partial\theta^2}(\theta, t)-\rho_k(\theta, t)-f(t)\right).
\end{eqnarray}

In the evolution equation of $\rho_k$, the term $f(t)$ contains the function $\rho$. One could not solve the the evolution equation of $\rho_k$ directly.
In order to get the \emph{short-time existence} of the flow, one needs to consider the above equations as a system.

\vskip.3cm
\begin{lemma}\label{lem:2.1.201909}
The nonlinear problem (\ref{eq:2.1.202302}) is equivalent to the following system on the domain $[0, 2m\pi]\times [0, T_{\max})$,
\begin{equation}\label{eq:2.12.202302}
\left\{
\begin{aligned}
&\frac{\partial p}{\partial t}(\theta, t)= \rho_k(\theta, t)-2w_k(\theta, t)-f(t),
\\
&\frac{\partial \rho_k}{\partial t}(\theta, t)= k\left(\frac{\partial^2 \rho_k}{\partial\theta^2}(\theta, t)-\rho_k(\theta, t)-f(t)\right),
\\
&\frac{\partial w_k}{\partial t} (\theta, t)= k\left(\frac{\partial^2 w_k}{\partial \theta^2}(\theta, t)-w_k(\theta, t)-f(t)\right),
\\
& \frac{\partial \rho}{\partial t} (\theta, t) = \frac{\partial^2\rho_k}{\partial\theta^2}(\theta, t)-\rho_k(\theta, t)-f(t),
\end{aligned}
\right.
\end{equation}
with initial values for $\theta\in [0, 2m\pi]$,
\begin{eqnarray*}
p(\theta, 0) = p_0(\theta), ~~w_k(\theta, 0)= w_{k0}(\theta), ~~\rho_k(\theta, 0) = \rho_{k0}(\theta), ~~\rho(\theta, 0) = \rho_{0}(\theta).
\end{eqnarray*}

\end{lemma}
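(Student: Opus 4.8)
The plan is to prove the equivalence in both directions, observing that the forward implication is essentially already contained in the computations \eqref{eq:2.6.202302}--\eqref{eq:2.11.202302}: if $X$ solves \eqref{eq:2.1.202302}, then the choice of the tangential coefficient $\alpha$ freezes the Frenet frame and the tangent angle in time, and the induced quantities $p$, $\rho$, $w_k$, $\rho_k$ obey exactly the four equations of \eqref{eq:2.12.202302} with the stated initial data. Hence the substance of the lemma is the reverse implication: given a solution $(p, \rho_k, w_k, \rho)$ of the system \eqref{eq:2.12.202302}, I must manufacture a genuine solution of the flow \eqref{eq:2.1.202302}.

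For the reconstruction I would use the support-function representation. Since $\langle X, N\rangle = -p$ and $\langle X, T\rangle = \frac{\partial p}{\partial\theta}$ (the latter derived above), the position vector of a $\theta$-parametrized locally convex curve is recovered as
\begin{equation*}
X(\theta, t) = \frac{\partial p}{\partial \theta}(\theta, t)\, T(\theta) - p(\theta, t)\, N(\theta),
\end{equation*}
where $T(\theta) = (\cos\theta, \sin\theta)$ and $N(\theta) = (-\sin\theta, \cos\theta)$ depend on $\theta$ alone. Differentiating in $t$ (the frame being time-independent) and inserting the first equation $p_t = \rho_k - 2w_k - f$ yields $\partial_t X = (p_t)_\theta\, T - p_t\, N = \bigl((\rho_k)_\theta - 2(w_k)_\theta\bigr) T + (2w_k - \rho_k + f) N$, which is precisely the right-hand side of \eqref{eq:2.1.202302} with the prescribed $\alpha$ and $\beta$. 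Thus, once the reconstructed $X$ is known to be a closed locally convex curve whose tangent angle is the variable $\theta$, it solves the flow.

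The crux is therefore to check that the four equations of \eqref{eq:2.12.202302}, which a priori treat $p, \rho, w_k, \rho_k$ as independent unknowns, in fact preserve the geometric relations binding them, namely \eqref{eq:2.7.202302}--\eqref{eq:2.8.202302} and the defining sums \eqref{eq:1.3.202302}. My strategy is a uniqueness argument via defect functions. Setting
\begin{equation*}
D := \rho_k - \Bigl(\tfrac{\partial^2 w_k}{\partial\theta^2} + w_k\Bigr), \qquad A := \rho - \Bigl(\tfrac{\partial^2 p}{\partial\theta^2} + p\Bigr),
\end{equation*}
together with the analogous discrepancies for the two shift-sums, all of which vanish at $t=0$ by the choice of initial data, a direct computation using the system gives $\partial_t D = k\bigl(\tfrac{\partial^2 D}{\partial\theta^2} - D\bigr)$ and $\partial_t A = -2D$, while the shift-sum defects satisfy similar closed linear equations by the $\mathbb{Z}_k$-shift symmetry and the $2m\pi$-periodicity in $\theta$. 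Each defect thus solves a homogeneous linear parabolic problem (or a linear ODE in $t$ driven by another vanishing defect) with zero initial data, so the maximum principle forces all of them to vanish, and the geometric relations persist for $t>0$.

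The step I expect to be the main obstacle is exactly this preservation of consistency: arranging the shift-sum defects so that they decouple into a triangular hierarchy of homogeneous linear equations requires care with the commuting roles of the angular shifts $\theta \mapsto \theta + \tfrac{2im\pi}{k}$, the operator $\partial_\theta^2$, and the $\theta$-independence of the nonlocal term $f(t)$. Once consistency holds, it remains to confirm that the reconstructed $X$ closes up (periodicity of $p$ and $\frac{\partial p}{\partial\theta}$ over $[0,2m\pi]$, inherited from the initial curve and preserved by the parabolic evolution) and stays locally convex, which amounts to $\rho = \frac{\partial^2 p}{\partial\theta^2} + p > 0$; this holds initially and persists on a short time interval by continuity. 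Assembling these facts produces a solution of \eqref{eq:2.1.202302}, completing the equivalence.
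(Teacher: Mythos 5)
Your proposal is correct, and its core coincides with the paper's own proof: the forward direction is read off from the computations \eqref{eq:2.2.202302}--\eqref{eq:2.11.202302}, and the reverse direction reconstructs the curve through the support-function formula $X=\frac{\partial p}{\partial\theta}T-pN$ (the paper's \eqref{eq:2.13.202302}) and verifies $\frac{\partial X}{\partial t}=\alpha T+(2w_k-\rho_k+f)N$, exactly as you do. The genuine difference is that the paper stops there: it never checks that the four unknowns of \eqref{eq:2.12.202302}, evolved as independent quantities, keep satisfying the compatibility relations $\rho=p_{\theta\theta}+p$, $\rho_k=(w_k)_{\theta\theta}+w_k$ and the shift-sum identities \eqref{eq:1.3.202302}; this is precisely what is needed for the reconstructed curve's own geometric $w_k$, $\rho_k$ and $f$ to agree with the solution components, i.e.\ for $X$ to solve \eqref{eq:2.1.202302} rather than some other PDE. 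Your defect argument fills this gap, and the key computations check out: with $D=\rho_k-(w_k)_{\theta\theta}-w_k$ and $A=\rho-p_{\theta\theta}-p$ one indeed gets $\partial_t D=k(D_{\theta\theta}-D)$ and $\partial_t A=-2D$ (the $\theta$-independence of $f$ is what makes the inhomogeneous terms cancel), so $D\equiv 0$ and then $A\equiv 0$ follow from vanishing initial data. For the shift sums, writing $P=\sum_{i=0}^{k-1}\tau^i$ with $\tau$ the shift by $\frac{2m\pi}{k}$, the defect $S=\rho_k-P\rho$ is not closed by itself; one must first treat $V=P\rho_k-k\rho_k$, which satisfies $\partial_t V=k(V_{\theta\theta}-V)$ with $V(\cdot,0)=0$ (using $P^2=kP$ on $2m\pi$-periodic functions), hence vanishes, after which $\partial_t S=-(V_{\theta\theta}-V)=0$ forces $S\equiv 0$; this is exactly the triangular hierarchy you anticipated, and it does close. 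In short, you follow the paper's route but make the equivalence genuinely two-sided, whereas the paper's one-paragraph proof implicitly assumes the consistency that your argument proves.
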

\begin{proof}
If $X(\cdot, t)$ is a family of locally convex curves evolving according to (\ref{eq:2.1.202302}), we immediately have evolution equations in (\ref{eq:2.12.202302}).
Suppose (\ref{eq:2.12.202302}) has smooth and positive solutions. Then one may construct a family of locally convex curves by $p$ according to
\begin{eqnarray}\label{eq:2.13.202302}
X(\theta, t)=\frac{\partial p}{\partial\theta}(\theta, t)T(\theta)-p(\theta, t)N(\theta),
\end{eqnarray}
where $T(\theta)$ and $N(\theta)$, parameterized by the tangent angle $\theta$, form the Frenet frame of the curve at every point $X(\theta, t)$.
Therefore, the curve $X(\cdot, t)$ satisfies
\begin{eqnarray*}
\frac{\partial X}{\partial t}=\frac{\partial^2 p}{\partial t \partial \theta}T-\frac{\partial p}{\partial t}N
=\frac{\partial}{\partial\theta}\left(\frac{\partial p}{\partial t}\right) T-\frac{\partial p}{\partial t}N =
\alpha T+\left(2w_k-\rho_k+f\right)N.
\end{eqnarray*}
This is the evolution equation in \eqref{eq:2.1.202302}. Thus we are done.
\end{proof}

\begin{lemma}\label{lem:2.2.201909}
The flow \eqref{eq:2.1.202302} has a unique and smooth solution on some time interval.
\end{lemma}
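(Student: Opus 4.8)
The plan is to invoke Lemma~\ref{lem:2.1.201909} to replace the fully nonlinear flow \eqref{eq:2.1.202302} by the equivalent semilinear system \eqref{eq:2.12.202302}, and then to solve \eqref{eq:2.12.202302} by a linearization combined with a contraction-mapping argument on a short time interval. The key structural observation is that, among the four equations in \eqref{eq:2.12.202302}, only the one for $\rho_k$ is genuinely parabolic: it is a linear, uniformly parabolic heat-type equation for a $2m\pi$-periodic function on the circle, whose sole awkward feature is that the forcing term $f(t)$ is \emph{nonlocal}, being the ratio of integrals \eqref{eq:1.4.202302} in which $\kappa^2=\rho^{-2}$ and $\partial_\theta^2\rho_k$ occur. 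Once the pair $(\rho_k,f)$ is known, the remaining unknowns are recovered without new analytic difficulty: $\rho$ by integrating the fourth equation in time, $w_k$ from its own (parabolic) equation, $p$ by integrating the first equation, and finally the curve itself from \eqref{eq:2.13.202302}.

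Concretely, I would fix a small $T>0$ and work in the parabolic H\"older space $C^{2+\alpha,\,1+\alpha/2}$ of $2m\pi$-periodic functions on $[0,2m\pi]\times[0,T]$. Starting from a trial forcing $\bar f\in C^{\alpha/2}([0,T])$, I first solve the linear problem
\[
\frac{\partial\rho_k}{\partial t}=k\Big(\frac{\partial^2\rho_k}{\partial\theta^2}-\rho_k-\bar f(t)\Big),\qquad \rho_k(\cdot,0)=\rho_{k0},
\]
whose unique smooth periodic solution is given through the heat semigroup generated by $L=k(\partial_\theta^2-1)$; since $\bar f$ is constant in $\theta$, its Duhamel contribution collapses to the scalar $-k\int_0^t e^{-k(t-\tau)}\bar f(\tau)\,d\tau$. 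I then recover $\rho$ by setting $\rho(\theta,t)=\rho_0(\theta)+\int_0^t\big(\partial_\theta^2\rho_k-\rho_k-\bar f\big)\,d\tau$, reinsert $\rho$ and $\rho_k$ into \eqref{eq:1.4.202302}, and call the resulting function $\Phi(\bar f)$. Any fixed point of $\Phi$ yields a solution of \eqref{eq:2.12.202302}, hence of \eqref{eq:2.1.202302}.

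The decisive step, and the main obstacle, is to show that $\Phi$ is a contraction on a small ball for $T$ small; here the nonlocal term must be handled with care on two fronts. First, $f$ is only well defined while the denominator $\int_0^{2m\pi}\rho^{-2}\,d\theta$ stays positive and $\kappa$ stays finite. Because the initial curve is smooth and locally convex we have $0<\rho_{\min}\le\rho_0\le\rho_{\max}<\infty$, and the integral representation above shows $\|\rho-\rho_0\|_\infty=O(T)$, so two-sided bounds on $\rho$ (equivalently $\kappa$), and thus local convexity and the positivity of the denominator, persist for $T$ small. Second, the numerator of $f$ contains $\partial_\theta^2\rho_k$, so a Lipschitz estimate for $\Phi$ requires control of two spatial derivatives of $\rho_k$; this is precisely what the parabolic Schauder estimate for the linear problem supplies in $C^{2+\alpha,\,1+\alpha/2}$, which is the reason for choosing that space. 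Combining the Lipschitz dependence of $\rho_k$ and $\rho$ on $\bar f$ with the uniform lower bound on the denominator, each difference $\bar f_1-\bar f_2$ acquires a factor $O(T^{\gamma})$ for some $\gamma>0$, making $\Phi$ a contraction once $T$ is small enough.

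Finally, the resulting fixed point $f\in C^{\alpha/2}$ gives $\rho_k\in C^{2+\alpha,\,1+\alpha/2}$; feeding this improved regularity back into the equations and iterating the Schauder estimates (a standard parabolic bootstrap) upgrades the solution to $C^\infty$ on $[0,2m\pi]\times(0,T]$, while uniqueness follows from the contraction property together with uniqueness for the underlying linear parabolic problems. This establishes a unique smooth solution on a short time interval $[0,T]$.
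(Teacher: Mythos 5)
Your proposal is correct in substance, but it takes a genuinely different route from the paper's own proof. You handle the nonlocal term by freezing it: solve the linear parabolic equation for $\rho_k$ with a trial forcing $\bar f$, recover $\rho$ by time integration, re-evaluate \eqref{eq:1.4.202302}, and show the resulting map $\Phi$ is a contraction for small $T$. This is robust, standard machinery, and your observation that the Duhamel contribution of the spatially constant forcing collapses to a scalar is what actually makes it work: it shows $\partial^2_\theta\rho_k$ does not depend on $\bar f$ at all, so the differences $\rho_k^1-\rho_k^2$ and $\rho^1-\rho^2$ are explicit time integrals of size $O(T)\|\bar f_1-\bar f_2\|_\infty$. (One caveat: the Schauder estimate in $C^{2+\alpha,1+\alpha/2}$ alone does not produce the factor $O(T^\gamma)$ — that gain comes precisely from this lower-order Duhamel structure, so the contraction constant should be derived from the explicit representation rather than from the Schauder bound; with that reading, the $C^{2+\alpha}$ apparatus is heavier than needed and plain $C^0([0,T])$ would do.) The paper instead constructs the solution almost explicitly: differentiating the $\rho_k$-equation in $\theta$ kills $f(t)$, so $\eta_k=\partial_\theta\rho_k$ satisfies the genuinely linear equation \eqref{eq:2.14.202302}, solvable for all $t\ge 0$; then $\rho_k=\int_0^\theta\eta_k\,d\widetilde{\theta}+\lambda(t)$, and the single unknown scalar $\lambda(t)$ is pinned down by two conservation laws built into the system — $\partial_t(\rho_k-k\rho)\equiv 0$ (equation \eqref{eq:2.16.202302}) and conservation of the elastic energy $\int_0^{2m\pi}\rho^{-1}\,d\theta$ (equation \eqref{eq:2.18.202302}, which is exactly what $f$ was designed to ensure) — leading to the scalar identity \eqref{eq:2.19.202302}, whose left-hand side is strictly monotone in $\lambda$. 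What each approach buys: your fixed-point argument is self-contained and would survive perturbations of the system that destroy its special structure; the paper's construction avoids function-space machinery entirely, is essentially explicit, and already exhibits the global-in-time solvability and the conservation laws that drive the long-time analysis in Sections 3 and 4.
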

\begin{proof}
According to Lemma \ref{lem:2.1.201909}, one needs show the system \eqref{eq:2.12.202302} has positive and smooth solutions on some time interval.
Denote by $\eta_k(\theta, t) := \frac{\partial\rho_k}{\partial \theta}(\theta, t)$. Then the function $\eta_k$ satisfies a linear equation
\begin{eqnarray}\label{eq:2.14.202302}
\frac{\partial \eta_k}{\partial t}(\theta, t)= k\left(\frac{\partial^2 \eta_k}{\partial\theta^2}(\theta, t)-\eta_k(\theta, t)\right)
\end{eqnarray}
with a smooth initial value $\eta_k(\theta, 0) = \frac{\partial\rho_k}{\partial \theta}(\theta, 0)$. Solving the linear parabolic equation \eqref{eq:2.14.202302}
with the initial value, we get a smooth function
$\eta_k(\theta, t)$ on the domain $[0, 2m\pi]\times [0, +\infty)$. Since $\rho_k (\theta, t) - \int_0^{\theta}\eta_k(\widetilde{\theta}, t) d\widetilde{\theta}$ is
independent of $\theta$, there is a function $\lambda(t)$, to be determined, so that
\begin{eqnarray}\label{eq:2.15.202302}
\rho_k (\theta, t) = \int_0^{\theta}\eta_k(\widetilde{\theta}, t) d\widetilde{\theta} + \lambda(t).
\end{eqnarray}

By observing the system \eqref{eq:2.12.202302}, we may compute to obtain
$\frac{\partial}{\partial t} (\rho_k - k \rho) \equiv 0$. So $\rho_k (\theta, t)- k \rho(\theta, t)$ is independent of time $t$, i.e, we have
\begin{eqnarray}\label{eq:2.16.202302}
\rho_k (\theta, t)- k \rho(\theta, t) =\rho_k (\theta, 0)- k \rho(\theta, 0).
\end{eqnarray}
Substituting \eqref{eq:2.15.202302} into \eqref{eq:2.16.202302}, we have
\begin{eqnarray}\label{eq:2.17.202302}
\rho(\theta, t) = \frac{1}{k}\left[\int_0^{\theta}\eta_k(\widetilde{\theta}, t) d\widetilde{\theta} + \lambda(t) -\rho_k (\theta, 0)+ k \rho(\theta, 0)\right].
\end{eqnarray}
Using the definition of $f(t)$ (see \eqref{eq:1.4.202302}) and the evolution equation of $\rho$, we get
\begin{eqnarray}\label{eq:2.18.202302}
\frac{d}{dt} \int_0^{2m\pi} \frac{1}{\rho(\theta, t)} = - \int_0^{2m\pi} \frac{1}{\rho^2(\theta, t)}\frac{\partial \rho}{\partial t} d \theta \equiv 0.
\end{eqnarray}
Therefore, the function $\lambda(t)$ is uniquely determined by the indentity
\begin{eqnarray}\label{eq:2.19.202302}
k\int_0^{2m\pi} \frac{d\theta }{\int_0^{\theta}\eta_k(\widetilde{\theta}, t) d\widetilde{\theta} + \lambda(t) -\rho_k (\theta, 0)+ k \rho(\theta, 0)}
\equiv \int_0^{2m\pi} \frac{1}{\rho_0 (\theta)} d\theta.
\end{eqnarray}

Once we have the function $\lambda(t)$, we get the values of $\rho_k (\theta, t)$, $\rho (\theta, t)$ and $f(t)$. So, integrating the evolution equations,
we obtain $w_k(\theta, t)$ and $p(\theta, t)$, respectively. By the continuity of $\rho_k (\theta, t)$, $\rho (\theta, t)$, $w_k(\theta, t)$ and $p(\theta, t)$,
the system \eqref{eq:2.12.202302} has positive and smooth solutions on some small time interval.
\end{proof}

\begin{remark}\label{remk:2.3.202302}
The equation \eqref{eq:2.16.202302} says that $\rho$ and $\rho_k$ have a concise relation.
The support function $p$ and the width function $w_k$ have a similar relation as shown in the equation \eqref{eq:2.16.202302}. This fact will be
used in the proof of Theorem \ref{thm:4.5.202302}.
\end{remark}

Lin and Tsai \cite{Lin-Tsai-2009} have considered a relative linear equation (compare to the system \eqref{eq:2.12.202302}) which can be used to answer Yau's problem of evolving one curve to another.
Recent progress on this problem can be found in the papers \cite{Gao-Zhang-2019, Gao-2023, McCoy-Schrader-Wheeler}.

\section{Long term existence}\label{s3}
In this section, we prove that the flow (\ref{eq:2.1.202302}) exists on the time interval $[0, +\infty)$. The main idea is to show that the radius of curvature $\rho$
has both uniformly positive lower and upper bounds. If so, then the flow can be infinitely extended, and in the evolution process the evolving curve $X(\cdot, t)$ is smooth
and locally convex.
Let $f(\theta, t)$ be a continuous function defined on $[0, 2m\pi] \times [0, T_{\max})$. We define
\begin{eqnarray*}
f_{\max}(t) = \max \{f(\theta, t)| \theta \in [0, 2m\pi]\}, ~~f_{\min}(t) = \min \{f(\theta, t)| \theta \in [0, 2m\pi]\}.
\end{eqnarray*}

\begin{lemma}\label{lem:3.1.202302}
Every order derivative of $\rho$ with respect to $\theta$ has uniform bounds if the flow (\ref{eq:2.1.202302}) preserves the local convexity of the evolving curve.
\end{lemma}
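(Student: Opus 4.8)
The plan is to read off all higher $\theta$-derivatives of $\rho$ from the single linear equation \eqref{eq:2.14.202302} for $\eta_k=\frac{\partial\rho_k}{\partial\theta}$, and then to transport these bounds onto $\rho$ by means of the conservation law \eqref{eq:2.16.202302}. First I would note that for every integer $j\geq 0$ the derivative $\partial_\theta^{\,j}\eta_k$ solves the \emph{same} linear equation as $\eta_k$ itself, because differentiation in $\theta$ commutes with the constant-coefficient operator $k(\partial_{\theta\theta}-\mathrm{id})$ and preserves $2m\pi$-periodicity. Introducing $u=e^{kt}\,\partial_\theta^{\,j}\eta_k$ turns that equation into the pure heat equation $\partial_t u = k\,\partial_{\theta\theta}u$ on the circle, so the maximum principle gives $\|u(\cdot,t)\|_\infty\leq\|u(\cdot,0)\|_\infty$ and hence
\[
\left\|\partial_\theta^{\,j}\eta_k(\cdot,t)\right\|_\infty \leq e^{-kt}\left\|\partial_\theta^{\,j}\eta_k(\cdot,0)\right\|_\infty .
\]
Since $X_0$ is smooth, $\rho_k(\cdot,0)$ and therefore each $\partial_\theta^{\,j}\eta_k(\cdot,0)$ is a bounded function; thus every spatial derivative of $\eta_k$ is uniformly bounded (in fact exponentially decaying) on $[0,T_{\max})$, which is the same as saying $\partial_\theta^{\,j}\rho_k$ is uniformly bounded for every $j\geq 1$.

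Next I would invoke \eqref{eq:2.16.202302}, i.e. $\rho_k-k\rho=\rho_k(\cdot,0)-k\rho(\cdot,0)$, which holds for all $t$ along the flow. Differentiating this identity $j$ times in $\theta$ (for $j\geq 1$) yields
\[
\partial_\theta^{\,j}\rho(\theta,t)=\frac{1}{k}\left[\partial_\theta^{\,j-1}\eta_k(\theta,t)-\partial_\theta^{\,j-1}\eta_k(\theta,0)\right]+\partial_\theta^{\,j}\rho(\theta,0),
\]
so each $\theta$-derivative of $\rho$ of order $\geq 1$ is written as a uniformly bounded derivative of $\eta_k$ plus a fixed smooth function of the initial data. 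The estimates of the first step then transfer verbatim to all $\partial_\theta^{\,j}\rho$, $j\geq 1$, which is exactly the assertion.

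Two remarks on where the hypotheses enter and what is delicate. The local convexity assumption is what guarantees $\kappa>0$, so that $\theta$ is a genuine global parameter and $\rho=1/\kappa$ together with the system \eqref{eq:2.12.202302} is well defined on the whole interval $[0,T_{\max})$; once that is granted, the derivative bounds are a purely linear consequence and require no curvature pinching. The one point needing care is applying the maximum principle on the periodic domain in the presence of the zeroth-order term $-k\eta_k$, and I expect this to be the main (though mild) obstacle: the substitution $u=e^{kt}\partial_\theta^{\,j}\eta_k$ is precisely what eliminates that term and reduces matters to the standard comparison principle for the heat equation on the circle. I would also flag that the bound on $\rho$ itself (the order-$0$ case) is \emph{not} produced by this argument; it must be obtained separately from the preserved quantity $\int_0^{2m\pi}\rho^{-1}\,d\theta$ in \eqref{eq:2.18.202302} combined with the first-order bound just established, and therefore belongs with the companion estimates giving the uniform lower and upper bounds for $\rho$.
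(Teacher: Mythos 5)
Your proposal is correct, and it reaches the paper's estimate \eqref{eq:3.2.202302} by a partly different route, so a comparison is worth recording. For the key step -- exponential decay of all $\theta$-derivatives of $\rho_k$ of order $\geq 1$ -- you and the paper use the same mechanism (an exponential-in-time weight plus the maximum principle on the periodic domain), but the paper applies it to the squared quantity $u=\tfrac12\bigl|\partial_\theta\rho_k\bigr|^2$, which satisfies a parabolic \emph{inequality}, whereas you apply it directly to the linear equation \eqref{eq:2.14.202302} satisfied by $\partial_\theta^{\,j}\eta_k$: the substitution $u=e^{kt}\partial_\theta^{\,j}\eta_k$ gives the heat equation exactly, and the two-sided maximum principle handles the sign; this is a slightly cleaner execution of the same idea. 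The genuine divergence is in how the bounds are transported to $\rho$. The paper integrates the evolution equation
\begin{equation*}
\frac{\partial}{\partial t}\left(\frac{\partial^i \rho}{\partial\theta^i}\right)
=\frac{\partial^{i+2}\rho_k}{\partial\theta^{i+2}}-\frac{\partial^i \rho_k}{\partial\theta^i}
\end{equation*}
in time, so it needs the \emph{exponential decay} of the right-hand side to make the time integral converge on $[0,T_{\max})$. You instead differentiate the conservation law \eqref{eq:2.16.202302}, $\rho_k-k\rho\equiv\rho_k(\cdot,0)-k\rho(\cdot,0)$, which expresses $\partial_\theta^{\,j}\rho(\theta,t)$ algebraically in terms of $\partial_\theta^{\,j}\rho_k(\theta,t)$ and initial data; this needs only uniform \emph{boundedness} of the $\rho_k$-derivatives, not their decay, and yields the constants $M_i$ in closed form. (Your use of \eqref{eq:2.16.202302} is legitimate here: under the hypothesis that local convexity is preserved, the system \eqref{eq:2.12.202302} holds, and $\partial_t(\rho_k-k\rho)\equiv 0$ follows by direct comparison of its second and fourth equations.) Finally, your closing remark correctly delimits the scope exactly as the paper does: the estimate covers derivatives of order $\geq 1$ only, and the two-sided bound on $\rho$ itself comes separately from the preserved elastic energy, i.e.\ from the argument of Lemma \ref{lem:3.3.202302}.
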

\begin{proof}
If the evolving curve $X(\cdot, t)$ is locally convex under the flow (\ref{eq:2.1.202302}), then we have the evolution equation of $\rho_k$ as (\ref{eq:2.12.202302}).
Differentiating this equation with respect to $\theta$ gives
\begin{eqnarray*}
\frac{\partial^2 \rho_k}{\partial t \partial \theta} = k\frac{\partial^3\rho_k }{\partial \theta^3}-k\frac{\partial\rho_k}{\partial\theta}.
\end{eqnarray*}
Let $u(\theta, t)=\frac{1}{2}\mid\frac{\partial\rho_k}{\partial\theta}\mid^2$. Then this function satisfies
\begin{eqnarray*}
\frac{\partial u}{\partial t}=k\left(\frac{\partial^2 u}{\partial \theta^2}-\left(\frac{\partial^2\rho_k}{\partial\theta^2}\right)^2\right)-2ku.
\end{eqnarray*}
Set $v(\theta, t)=e^{2kt}u(\theta, t)$. Then $v(\theta, 0)=u(\theta, 0)$ and
\begin{eqnarray*}
\frac{\partial v}{\partial t} \leq k\frac{\partial^2 v}{\partial \theta^2}.
\end{eqnarray*}
Applying the maximum principle, one obtains
$v_{\max}(t)\leq v_{\max}(0)$,
which implies
\begin{eqnarray*}
u(\theta, t)\leq u_{\max}(0)e^{-2kt}.
\end{eqnarray*}
Moreover,
\begin{eqnarray} \label{eq:3.1.202302}
\left|\frac{\partial\rho_k}{\partial\theta}(\theta, t) \right| \leq \max_\theta\left|\frac{\partial\rho_k}{\partial\theta}(\theta, 0)\right| e^{-kt}.
\end{eqnarray}
Denote by $C_i$ the constant $\max\limits_\theta\left|\frac{\partial^i \rho_k}{\partial\theta^i}(\theta, 0)\right|$, $i=2, 3, \cdots$.
Using the evolution equation of $\frac{\partial^i \rho}{\partial\theta^i}$, one may similarly prove that
$\left|\frac{\partial^i \rho_k}{\partial\theta^i}(\theta, t) \right|$ is bounded by $C_ie^{-kt}$.

Differentiating the evolution equation of $\rho$, one obtains
\begin{eqnarray*}
\frac{\partial}{\partial t}\left(\frac{\partial^i \rho}{\partial\theta^i}\right) =\frac{\partial^{i+2}\rho_k}{\partial\theta^{i+2}}-\frac{\partial^i \rho_k}{\partial\theta^i}.
\end{eqnarray*}
Since $\left|\frac{\partial^i \rho_k}{\partial\theta^i}(\theta, t) \right|$ decays exponentially, there exists a constant $M_i$, independent of time,
such that
\begin{eqnarray}\label{eq:3.2.202302}
\left|\frac{\partial^i \rho}{\partial\theta^i}(\theta, t) \right| \leq M_i, ~~(\theta, t)\in [0, 2m\pi]\times [0, T_{\max}),~~i=1, 2, \cdots.
\end{eqnarray}
The proof is finished.
\end{proof}

\begin{lemma}\label{lem:3.2.202302}
If the flow (\ref{eq:2.1.202302}) preserves the local convexity of the evolving curve, then the elastic energy is fixed as time goes.
\end{lemma}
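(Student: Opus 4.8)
The plan is to rewrite the elastic energy in the tangent-angle parametrization, where it collapses to a strikingly simple quantity, and then to verify that its time derivative vanishes by invoking the evolution equation for $\rho$ together with the precise form of the nonlocal term $f(t)$.

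First I would record that, because the evolving curve $X(\cdot,t)$ stays locally convex, the tangent angle $\theta$ remains a valid parameter with $\frac{d\theta}{ds}=\kappa>0$; hence $ds=\rho\,d\theta$ and $\kappa=1/\rho$. Substituting these into $E(X(\cdot,t))=\int\kappa^2\,ds$ gives
\[
E(X(\cdot,t))=\int_0^{2m\pi}\frac{1}{\rho^2(\theta,t)}\,\rho(\theta,t)\,d\theta=\int_0^{2m\pi}\frac{d\theta}{\rho(\theta,t)},
\]
so that the elastic energy reduces to $\int_0^{2m\pi}\kappa\,d\theta$. The decisive structural feature is that, under the reparametrized flow \eqref{eq:2.1.202302}, the angle $\theta$ is independent of $t$ by \eqref{eq:2.6.202302}; thus the domain of integration is fixed and one may legitimately differentiate under the integral sign.

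Next I would differentiate in $t$ and substitute the evolution equation \eqref{eq:2.9.202302} for $\rho$, using $\kappa^2=\rho^{-2}$:
\[
\frac{d}{dt}E(X(\cdot,t))=-\int_0^{2m\pi}\frac{1}{\rho^2}\frac{\partial\rho}{\partial t}\,d\theta
=-\int_0^{2m\pi}\kappa^2\left(\frac{\partial^2\rho_k}{\partial\theta^2}-\rho_k-f(t)\right)d\theta.
\]
Expanding this and inserting the definition \eqref{eq:1.4.202302} of $f(t)$, the contributions $-\int_0^{2m\pi}\kappa^2\,\partial^2_\theta\rho_k\,d\theta$ and $+\int_0^{2m\pi}\kappa^2\rho_k\,d\theta$ are cancelled exactly by $f(t)\int_0^{2m\pi}\kappa^2\,d\theta$, so that $\frac{d}{dt}E\equiv0$. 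This is precisely the computation already carried out in \eqref{eq:2.18.202302}, and it shows the elastic energy is constant in time.

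The essential point worth emphasizing is that $f(t)$ was reverse-engineered for exactly this algebraic cancellation: no integration by parts is required, and the vanishing of the derivative is built directly into the numerator of \eqref{eq:1.4.202302}. The only genuinely technical steps are minor, namely justifying differentiation under the integral sign, which follows from the smoothness of the solution together with the uniform derivative bounds of Lemma \ref{lem:3.1.202302} on each compact time subinterval, and the finiteness of the integrand, guaranteed by the strict positivity of $\rho$ that local convexity supplies. Hence I expect no real obstacle here; the entire content of the lemma is encapsulated in the design of the nonlocal term.
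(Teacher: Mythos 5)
Your proposal is correct and takes essentially the same route as the paper: the paper differentiates $E=\int_0^{2m\pi}\kappa\,d\theta$ using the curvature evolution equation \eqref{eq:3.3.202302}, which is identical to your computation with $\frac{\partial \kappa}{\partial t}=-\kappa^2\frac{\partial \rho}{\partial t}$, and the same exact cancellation via the definition \eqref{eq:1.4.202302} of $f(t)$ concludes the argument. Your observation that this is the computation already appearing in \eqref{eq:2.18.202302} is also accurate.
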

\begin{proof}
Under the flow (\ref{eq:2.1.202302}), the curvature $\kappa(\theta, t)$ of the curve $X(\cdot, t)$ evolves according to (\ref{eq:2.5.202302}),
i.e.,  we have
\begin{eqnarray}\label{eq:3.3.202302}
\frac{\partial \kappa}{\partial t} = \kappa^2\left(-\frac{\partial^2 \rho_k}{\partial \theta^2} + \rho_k(\theta, t)+f(t)\right).
\end{eqnarray}
So the elastic energy of the evolving curve satisfies
\begin{eqnarray*}
\frac{d E}{d t} (t) &=& \frac{d}{d t} \int_{X(\cdot, t)} \kappa^2(s, t) ds
\\
&=& \frac{d}{d t} \int_{mS^1} \kappa(\theta. t) d\theta
\\
&=& \int_{mS^1} \kappa^2 \left(-\frac{\partial^2 \rho_k}{\partial \theta^2} + \rho_k(\theta, t)+f(t) \right) d\theta.
\end{eqnarray*}
By the definition of $f(t)$, one has $\frac{d E}{d t} \equiv 0$.
\end{proof}

Since the elastic energy $E$ equals $\int_0^{2m\pi} \frac{1}{\rho(\theta, t)} \text{d}\theta$, one gets that under the flow (\ref{eq:2.1.202302})
\begin{eqnarray}\label{eq:3.4.202302}
\frac{2m\pi}{\rho_{\max}(t)}\leq E \leq\frac{2m\pi}{\rho_{\min}(t)},
\end{eqnarray}
if this flow preserves the local convexity of the evolving curve.
This observation together with the gradient estimate of $\rho$ lead to its uniform bounds.

\begin{lemma}\label{lem:3.3.202302}
Under the condition of Lemma \ref{lem:3.2.202302}, there exist two positive constants $m_0$ and $M_0$ independent of time such that the curvature radius is bounded as
\begin{eqnarray}\label{eq:3.5.202302}
m_0 \leq \rho(\theta, t) \leq M_0.
\end{eqnarray}
\end{lemma}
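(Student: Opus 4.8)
The plan is to combine the two facts already in hand: the uniform gradient bound furnished by Lemma~\ref{lem:3.1.202302} in its case $i=1$, which gives a constant $M_1$ independent of $t$ with $\left|\frac{\partial \rho}{\partial\theta}(\theta,t)\right| \leq M_1$ on $[0,2m\pi]\times[0,T_{\max})$, and the conserved elastic energy $E=\int_0^{2m\pi}\rho^{-1}\,d\theta$ together with the two-sided estimate \eqref{eq:3.4.202302}. Throughout I regard $\rho(\cdot,t)$ as a smooth $2m\pi$-periodic function, so that on the circle $mS^1$ any two angles can be joined by an arc of length at most $m\pi$.

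For the upper bound I would use the right-hand inequality in \eqref{eq:3.4.202302}, which gives $\rho_{\min}(t)\leq 2m\pi/E$; hence at each time there is an angle $\theta_0=\theta_0(t)$ with $\rho(\theta_0,t)\leq 2m\pi/E$. Integrating the gradient bound along the shorter arc from $\theta_0$ to an arbitrary $\theta$ then yields
\begin{eqnarray*}
\rho(\theta,t)\leq \rho(\theta_0,t)+M_1|\theta-\theta_0|\leq \frac{2m\pi}{E}+m\pi M_1=:M_0,
\end{eqnarray*}
which is the desired uniform upper bound.

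The lower bound is the genuine obstacle, because the left-hand inequality in \eqref{eq:3.4.202302} only forces $\rho_{\max}(t)\geq 2m\pi/E$, and combining this with the gradient bound cannot by itself keep $\rho$ away from zero: the quantity $2m\pi/E-m\pi M_1$ may well be negative. What I would exploit instead is that the gradient bound forbids $\rho$ from forming a thin spike near a small value, so that a small minimum would make $\int_0^{2m\pi}\rho^{-1}\,d\theta$ logarithmically large and contradict the constancy of $E$. Concretely, fixing $t$ and writing $\delta=\rho_{\min}(t)=\rho(\theta_0,t)$, the estimate $\rho(\theta,t)\leq \delta+M_1(\theta-\theta_0)$ on the arc $[\theta_0,\theta_0+m\pi]$ gives
\begin{eqnarray*}
E=\int_0^{2m\pi}\frac{d\theta}{\rho(\theta,t)}\geq \int_{\theta_0}^{\theta_0+m\pi}\frac{d\theta}{\delta+M_1(\theta-\theta_0)}=\frac{1}{M_1}\ln\!\left(1+\frac{M_1 m\pi}{\delta}\right).
\end{eqnarray*}
Solving this inequality for $\delta$ yields $\delta\geq m\pi M_1\big(e^{M_1 E}-1\big)^{-1}=:m_0>0$, a bound independent of $t$; the degenerate case $M_1=0$, in which $\rho$ is constant in $\theta$, is immediate.

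Together the two estimates give $m_0\leq \rho(\theta,t)\leq M_0$ uniformly in time, which is \eqref{eq:3.5.202302}. I expect the logarithmic integral estimate for the lower bound to be the crux of the argument, since it is the only place where the finiteness of the conserved energy is converted into a quantitative non-degeneracy of $\rho$; the upper bound and all remaining steps follow directly from the gradient bound of Lemma~\ref{lem:3.1.202302} and the monotone relation \eqref{eq:3.4.202302}.
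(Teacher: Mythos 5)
Your proof is correct, and it reaches \eqref{eq:3.5.202302} by a genuinely different mechanism than the paper, although both arguments consume exactly the same two ingredients: the uniform gradient bound $\left|\frac{\partial\rho}{\partial\theta}\right|\le M_1$ from Lemma \ref{lem:3.1.202302} and the conservation of $E=\int_0^{2m\pi}\rho^{-1}\,d\theta$ encoded in \eqref{eq:3.4.202302}. The paper obtains both bounds from a single multiplicative oscillation estimate: writing $\rho_{\min}(t)=\rho(\theta_1,t)$ and $\rho_{\max}(t)=\rho(\theta_2,t)$, it integrates $\frac{1}{\rho}\frac{\partial\rho}{\partial\theta}$ to get
\begin{equation*}
\ln\rho_{\max}(t)-\ln\rho_{\min}(t)=\int_{\theta_1}^{\theta_2}\frac{1}{\rho}\frac{\partial\rho}{\partial\theta}\,d\theta\le \int_0^{2m\pi}\frac{1}{\rho}\left|\frac{\partial\rho}{\partial\theta}\right|d\theta\le M_1E,
\end{equation*}
i.e.\ the Harnack-type ratio bound $\rho_{\max}(t)\le\rho_{\min}(t)e^{M_1E}$; since \eqref{eq:3.4.202302} pins $\rho_{\min}(t)\le 2m\pi/E\le\rho_{\max}(t)$, both constants $m_0=\frac{2m\pi}{E}e^{-M_1E}$ and $M_0=\frac{2m\pi}{E}e^{M_1E}$ drop out at once. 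You instead split the two bounds: an additive estimate $\rho\le 2m\pi/E+m\pi M_1$ for the upper bound, and for the lower bound a barrier comparison of $1/\rho$ with $1/(\delta+M_1 s)$, which converts a small minimum into a logarithmically large energy. Your diagnosis that a purely additive argument cannot yield the lower bound is exactly right, and your barrier estimate is a correct substitute; it is in effect a localized version of the paper's logarithmic oscillation estimate, and both produce lower bounds exponentially small in $M_1E$. What the paper's route buys is economy and symmetry---one inequality serves both ends at once; what yours buys is a slightly sharper upper bound (linear in $M_1$ rather than exponential) and a transparent accounting of exactly where the conserved energy is indispensable.
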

\begin{proof}
Under the flow (\ref{eq:2.1.202302}), the gradient estimate of $\rho$ tells us that
$\mid\frac{\partial\rho}{\partial\theta}\mid\leq M_1$, where $M_1$ is a positive constant, independent of $t$.
Fix the time $t$. By continuity of $\rho$, there exist $\theta_1$ and $\theta_2$ such that $\rho_{\min}(t) = \rho(\theta_1, t)$ and  $\rho_{\max}(t) = \rho(\theta_2, t)$.
So
\begin{eqnarray*}
\ln\rho_{\max}(t)-\ln\rho_{\min}(t) = \int_{\theta_1}^{\theta_2}\frac{1}{\rho} \frac{\partial\rho}{\partial\theta} \text{d}\theta
\leq \int_0^{2m\pi}\frac{1}{\rho}\left|\frac{\partial\rho}{\partial\theta}\right| \text{d}\theta \leq M_1E.
\end{eqnarray*}
Therefore,
\begin{equation}\label{eq:3.6.202302}
\rho_{\max}(t)\leq\rho_{\min}(t)e^{M_1E}.
\end{equation}
Setting $m_0 = \frac{2m\pi}{E}e^{-M_1E}$ and $M_0 = \frac{2m\pi}{E}e^{M_1E}$, and combining (\ref{eq:3.4.202302}) and (\ref{eq:3.6.202302}),
one has the estimate (\ref{eq:3.5.202302}).
\end{proof}

Using this lemma, we may show that the flow (\ref{eq:2.1.202302}) preserves the local convexity of the evolving curve.

\begin{lemma}\label{lem:3.4.202302}
If the initial curve $X_0$ is locally convex, then the evolving curve $X(\cdot, t)$ is also locally convex under the flow (\ref{eq:2.1.202302}).
\end{lemma}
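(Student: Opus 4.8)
The plan is to run a continuity (open--closed) argument in the time variable that breaks the apparent circularity between this lemma and Lemmas \ref{lem:3.1.202302}--\ref{lem:3.3.202302}, each of which only yields conclusions \emph{under the assumption} that local convexity is preserved. First I would record that Lemma \ref{lem:2.2.201909} provides a smooth solution on a maximal interval $[0, T_{\max})$, and that since $X_0$ is locally convex its radius of curvature satisfies $\rho(\theta, 0) \geq \delta > 0$ for some $\delta$, by continuity and compactness of $[0, 2m\pi]$. Recalling that local convexity is equivalent to $\rho = 1/\kappa > 0$, the set of times on which $X(\cdot, \tau)$ stays locally convex therefore contains a neighborhood of $0$.

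Next I would set $T^{*} = \sup\{\, t \in [0, T_{\max}) : \rho(\theta, \tau) > 0 \text{ for all } (\theta, \tau)\in [0, 2m\pi]\times[0, t]\,\}$, so that the goal reduces to proving $T^{*} = T_{\max}$. On the half-open interval $[0, T^{*})$ the evolving curve is locally convex by definition, hence Lemmas \ref{lem:3.1.202302}, \ref{lem:3.2.202302} and \ref{lem:3.3.202302} all apply there. The crucial point that makes the argument non-circular is that the lower bound $m_0$ produced in Lemma \ref{lem:3.3.202302} depends \emph{only} on the initial data: it is built from the conserved elastic energy $E$ and from the gradient constant $M_1 = \max_\theta|\partial_\theta \rho(\theta, 0)|$, which in turn comes from the exponentially decaying estimates on $\partial_\theta^{i}\rho_k$ in Lemma \ref{lem:3.1.202302}. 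None of these constants depends on the length $T^{*}$ of the interval, so $\rho(\theta, \tau) \geq m_0 > 0$ holds uniformly for all $(\theta, \tau)\in [0, 2m\pi]\times[0, T^{*})$.

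I would then pass to the limit $\tau \to T^{*}$. Suppose, for contradiction, that $T^{*} < T_{\max}$; then $\rho$ extends smoothly up to $\tau = T^{*}$, and the uniform bound forces $\rho(\theta, T^{*}) \geq m_0 > 0$ for every $\theta$, so $X(\cdot, T^{*})$ is still locally convex. Because strict positivity of the smooth function $\rho$ on the compact domain $[0, 2m\pi]$ is an open condition in $t$, there is $\varepsilon > 0$ with $\rho > 0$ on $[0, 2m\pi]\times[0, T^{*}+\varepsilon)$, contradicting the maximality of $T^{*}$. Hence $T^{*} = T_{\max}$ and the curve remains locally convex throughout the flow.

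The main obstacle is exactly this bootstrapping: one must confirm that the constants appearing in the conditional estimates of the preceding lemmas are genuinely time-independent and fixed by the initial curve, so that the uniform positive lower bound on $\rho$ survives all the way up to the right endpoint $T^{*}$; once this is secured, the openness of the local-convexity condition closes the loop. I note in passing that the same two-sided bound $m_0 \leq \rho \leq M_0$ together with the derivative estimates of Lemma \ref{lem:3.1.202302} is what will rule out finite-time singularities and yield $T_{\max} = +\infty$, although that global-existence conclusion is addressed separately.
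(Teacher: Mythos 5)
Your proposal is correct and takes essentially the same route as the paper: the paper's proof likewise supposes a first time $t_0$ at which the curvature minimum reaches zero, applies the time-independent bound $\kappa \geq \frac{E}{2m\pi}e^{-M_1E}$ from Lemma \ref{lem:3.3.202302} on $[0, t_0)$ (valid there because local convexity holds on that interval), and concludes by continuity of $\kappa$ that the minimum at $t_0$ is still positive, a contradiction. Your open--closed formulation with $T^{*}$ is simply a more explicitly structured version of this same continuity argument, resting on the identical key observation that the constants $E$ and $M_1$ are fixed by the initial data.
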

\begin{proof}
Suppose the flow exists on time interval $[0, T_{\max})$ and there is a positive $t_0<T_{\max}$ such that $X(\cdot, t)$ is locally convex on time interval $[0, t_0)$
but the minimum of the curvature $\kappa(\theta, t_0)$, with respect to $\theta$, is 0.

By the proof of Lemma \ref{lem:3.3.202302}, the curvature has a lower bound $\kappa(\theta, t) \geq \frac{E}{2m\pi}e^{-M_1E}$ under the flow (\ref{eq:2.1.202302})
for every $(\theta, t)\in [0, 2m\pi]\times [0, t_0)$. The continuity of curvature implies that $\kappa(\theta, t_0) \geq \frac{E}{2m\pi}e^{-M_1E} >0$ holds for all $\theta$.
A contradiction.
\end{proof}

\begin{theorem}\label{thm:3.5.202302}
If the initial curve $X_0(\theta)$ is locally convex, then the flow (\ref{eq:2.1.202302}) has a unique smooth solution $X(\cdot, t)$ on $[0, 2m\pi]\times [0,+\infty)$.
\end{theorem}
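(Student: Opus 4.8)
The plan is to upgrade the short-time solution from Lemma~\ref{lem:2.2.201909} to a global one by the standard continuation argument: show that the solution cannot develop a singularity in finite time because the construction of Lemma~\ref{lem:2.2.201909} never breaks down. By that lemma the flow \eqref{eq:2.1.202302} admits a unique smooth solution on a maximal existence interval $[0, T_{\max})$ with $T_{\max}>0$; I would suppose for contradiction that $T_{\max}<+\infty$ and derive a contradiction from the a priori estimates gathered above.

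First I would record that the whole construction of Lemma~\ref{lem:2.2.201909} is already global except for one point. The gradient $\eta_k=\partial_\theta\rho_k$ solves the linear parabolic equation \eqref{eq:2.14.202302}, which has a smooth solution on $[0,2m\pi]\times[0,+\infty)$, so $\eta_k$ exists for all time; and once the scalar $\lambda(t)$ of \eqref{eq:2.15.202302} is known, the quantities $\rho_k$, $\rho$, $w_k$, $p$ and $f$ are all determined. Thus global existence reduces to solving the defining identity \eqref{eq:2.19.202302} for $\lambda(t)$ for every $t$, and the only way this can fail is for the denominator $k\rho(\theta,t)$ to lose positivity, i.e. for local convexity to degenerate.

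Next I would rule this out using the a priori estimates. On $[0,T_{\max})$ the evolving curve stays locally convex by Lemma~\ref{lem:3.4.202302}, so the hypotheses of Lemmas~\ref{lem:3.1.202302} and~\ref{lem:3.3.202302} hold. Lemma~\ref{lem:3.3.202302} then gives the uniform pinching \eqref{eq:3.5.202302}, namely $m_0\le\rho(\theta,t)\le M_0$ with $m_0,M_0$ independent of $t$; equivalently $1/M_0\le\kappa\le 1/m_0$, so parabolicity stays uniformly non-degenerate. Lemma~\ref{lem:3.1.202302} supplies matching uniform bounds $|\partial_\theta^i\rho|\le M_i$ on all spatial derivatives, and these propagate through the evolution equations \eqref{eq:2.12.202302} (which trade time derivatives for spatial ones) to uniform $C^\infty$ bounds up to $t=T_{\max}$. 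Hence $(p,\rho_k,w_k,\rho)$ extends smoothly to the closed interval, yielding a smooth limiting curve $X(\cdot,T_{\max})$ with $\kappa\ge 1/M_0>0$. Taking it as new initial data and reapplying Lemma~\ref{lem:2.2.201909} extends the flow to $[0,T_{\max}+\varepsilon)$, contradicting maximality; therefore $T_{\max}=+\infty$, with uniqueness and smoothness inherited from the short-time theory.

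The hard part will be the nonlocal term rather than the parabolic continuation, which is routine once the estimates are in place. One must verify that $\lambda(t)$, and hence $f(t)$, stay bounded as $t\to T_{\max}$. This is exactly where the lower bound $\rho\ge m_0>0$ of Lemma~\ref{lem:3.3.202302} is decisive: because the left-hand side of \eqref{eq:2.19.202302} is strictly decreasing in $\lambda$ and its denominator equals $k\rho(\theta,t)\ge km_0>0$ uniformly, the identity remains uniquely solvable with $\lambda(t)$ bounded, and $f(t)$ inherits a uniform bound through its definition \eqref{eq:1.4.202302}. Thus the pinching estimate keeps the nonlocal flow from degenerating and closes the continuation argument.
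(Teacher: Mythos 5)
Your proposal is correct and takes essentially the same route as the paper: a continuation argument assuming $T_{\max}<+\infty$, the a priori estimates of Lemmas~\ref{lem:3.1.202302}, \ref{lem:3.3.202302} and \ref{lem:3.4.202302} giving uniform pinching of $\rho$, boundedness of the nonlocal term $f(t)$, smooth extension of the solution up to $T_{\max}$, and a reapplication of short-time existence (Lemma~\ref{lem:2.2.201909}) to contradict maximality. The only cosmetic difference is that the paper bounds the velocity $2w_k-\rho_k+f$ directly (controlling $w_k$ via exponential decay of its $\theta$-derivatives) and integrates it to define $X_{T_{\max}}$, whereas you package the same extension through the solvability of \eqref{eq:2.19.202302} for $\lambda(t)$; both hinge on exactly the same estimates.
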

\begin{proof}
Suppose the flow (\ref{eq:2.1.202302}) exists on the maximal time interval $[0, T_{\max})$ and $T_{\max}$ is a finite positive number. It follows from (\ref{eq:3.2.202302}) and
(\ref{eq:3.5.202302}) that $\kappa$ and all its derivatives are uniformly bounded on the time interval $[0, T_{\max})$.
So the nonlocal term $f(t)$ has uniform bound which is independent of $T_{\max}$.

By the evolution equation of the $k$-order width $w_k(\theta, t)$, its derivative $\frac{\partial^i w_k}{\partial \theta^i}$ satisfies
\begin{eqnarray*}
\frac{\partial}{\partial t}\left(\frac{\partial^i w_k}{\partial \theta^i}\right)
=k\frac{\partial^{i+2}w_k}{\partial\theta^{i+2}}-k\frac{\partial^i w_k}{\partial\theta^i}.
\end{eqnarray*}
Applying the same trick as in the proof of Lemma \ref{eq:3.1.202302}, one may show that $|\frac{\partial^i w_k}{\partial\theta^i}|^2$
decays exponentially, then $w_k(\theta, t)$ is also uniformly bounded on the time interval $[0, T_{\max})$.

Hence the velocity of the flow has uniform bound which is independent of $w$.
By the unique existence of the flow, one obtains a smooth and locally convex curve
\begin{eqnarray*}
X_{T_{\max}}(\theta) := X_0(\theta) + \int_0^{T_{\max}} \frac{\partial X}{\partial t} (\theta, t) dt.
\end{eqnarray*}
Let $X_{T_{\max}}(\theta)$ evolve according to the flow (\ref{eq:2.1.202302}). Then there exists a family of smooth, locally convex curves $X(\cdot, t)$ on the time interval
$[T_{\max}, T_{\max}+\varepsilon)$, where $\varepsilon$ is a positive number. By the unique existence of the flow (\ref{eq:2.1.202302}), this flow is extended on a larger time interval
$[0, T_{\max}+\varepsilon)$. This contradicts the maximality of $T_{\max}$.
\end{proof}

\section{Convergence}\label{s4}
In this section, we explore the asymptotic behavior of the flow (\ref{eq:2.1.202302}) and complete the proof of Theorem \ref{thm:1.1.202302}.

Let $X_0(\theta)$ be a locally convex plane curve with rotation number $m$ and tangent angle $\theta$,
where $\theta\in [0, 2m\pi]$. Expand the support function as
\begin{eqnarray*}
p(\theta)=\frac{a_0}{2}+\sum\limits_{n=1}^{\infty}\left(a_n\cos\frac{n\theta}{m}+b_n\sin\frac{n\theta}{m}\right),
\end{eqnarray*}
where the coefficients are expressed as
\begin{eqnarray*}
a_0= \frac{L_0}{m\pi},~~~
a_n=\frac{1}{m\pi}\int_{-m\pi}^{m\pi}p(\theta)\cos\frac{n\theta}{m} d\theta,
~~~
b_n=\frac{1}{m\pi}\int_{-m\pi}^{m\pi}p(\theta)\sin\frac{n\theta}{m} d\theta.
\end{eqnarray*}
So the $k$-order width of the curve is
\begin{eqnarray}\label{eq:4.1.202302}
w_k(\theta)=\frac{k a_0}{2}+\sum\limits_{n=1}^{\infty}a_n\sum\limits_{l=0}^{k-1} \cos\left(\frac{n\theta}{m}+\frac{2nl\pi}{k}\right)
     +\sum\limits_{n=1}^{\infty}b_n\sum\limits_{l=0}^{k-1}\sin\left(\frac{n\theta}{m}+\frac{2nl\pi}{k}\right).
\end{eqnarray}

For a positive integer $l$, one has the identities
\begin{eqnarray*}
\sin\left(\frac{2n\pi}{k}\right)+\sin\left(\frac{4n\pi}{k}\right)+\cdots+\sin\left(\frac{2(k-1)n\pi}{k}\right)
=0
\end{eqnarray*}
and
\begin{eqnarray*}
\cos\left(\frac{2n\pi}{k}\right)+\cos\left(\frac{4n\pi}{k}\right)+\cdots+\cos\left(\frac{2(k-1)n\pi}{k}\right)
=
\left\{
\begin{aligned}
-1, \quad n\neq kl, &\\
k-1, \quad n=kl.
\end{aligned}
\right.
\end{eqnarray*}
So one may compute
\begin{eqnarray}
w_k(\theta) &=& \frac{k a_0}{2} +\sum\limits_{n=1}^{\infty}a_n\cos \frac{n\theta}{m}\sum\limits_{l=0}^{k-1}\cos\frac{2nl\pi}{k}
-\sum\limits_{n=1}^{\infty}a_n\sin \frac{n\theta}{m}\sum\limits_{l=0}^{k-1}\sin\frac{2nl\pi}{k}
\nonumber\\
&& +\sum\limits_{n=1}^{\infty}b_n\sin \frac{n\theta}{m}\sum\limits_{l=0}^{k-1}\cos\frac{2nl\pi}{k}
   +\sum\limits_{n=1}^{\infty}b_n\cos \frac{n\theta}{m}\sum\limits_{l=0}^{k-1}\sin\frac{2nl\pi}{k}
\nonumber\\
&=& \frac{k a_0}{2}+\sum\limits_{n=1}^{\infty}k\left( a_{nk} \cos\left(\frac{nk\theta}{m}\right)+b_{nk}\sin\left(\frac{nk\theta}{m}\right)\right). \label{eq:4.2.202302}
\end{eqnarray}
Hence, one has the following proposition.

\begin{proposition}\label{pro:4.1.202302}
Let $X_0(\theta)$ be a locally convex plane curve with the rotation number $m$. If it is of constant $k$-order width, then
\begin{eqnarray*}
p(\theta)=\frac{a_0}{2}+\sum\limits_{n\neq kl}^{\infty}\left(a_n\cos\frac{n\theta}{m}+b_n\sin\frac{n\theta}{m}\right).
\end{eqnarray*}
\end{proposition}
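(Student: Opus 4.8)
The plan is to read the conclusion directly off the Fourier computation already recorded in \eqref{eq:4.2.202302}. The first thing I would do is make the hypothesis precise: saying that $X_0$ has \emph{constant $k$-order width} means exactly that the function $w_k(\theta)$ is independent of $\theta$, i.e. $w_k(\theta)\equiv c$ for some constant $c$. Everything then reduces to extracting information about the Fourier coefficients of the support function from this single constraint.

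Next I would invoke the identity \eqref{eq:4.2.202302}, which already expresses
\[
w_k(\theta)=\frac{ka_0}{2}+\sum_{n=1}^{\infty}k\Big(a_{nk}\cos\tfrac{nk\theta}{m}+b_{nk}\sin\tfrac{nk\theta}{m}\Big).
\]
Because the functions $\{\cos\frac{nk\theta}{m},\sin\frac{nk\theta}{m}\}_{n\ge 1}$, together with the constant function, belong to the orthogonal trigonometric system on $[-m\pi,m\pi]$, this display is precisely the Fourier expansion of $w_k$, with constant term $\tfrac{ka_0}{2}$ and with $n$-th harmonic coefficients $k\,a_{nk}$ and $k\,b_{nk}$. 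By the uniqueness of Fourier coefficients, the requirement $w_k\equiv c$ forces every nonconstant harmonic to vanish, whence $a_{nk}=b_{nk}=0$ for all $n\ge 1$ (and incidentally $c=\tfrac{ka_0}{2}$). Substituting this back into
\[
p(\theta)=\frac{a_0}{2}+\sum_{n=1}^{\infty}\Big(a_n\cos\tfrac{n\theta}{m}+b_n\sin\tfrac{n\theta}{m}\Big)
\]
simply removes exactly those harmonics whose index is a positive multiple of $k$, which is the asserted formula $p(\theta)=\frac{a_0}{2}+\sum_{n\neq kl}\big(a_n\cos\frac{n\theta}{m}+b_n\sin\frac{n\theta}{m}\big)$.

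I do not expect a genuine obstacle here, since all the real work is already contained in \eqref{eq:4.2.202302}; the remaining content is just the standard fact that a trigonometric series equals a constant only when all of its positive-frequency coefficients vanish. The one point worth a moment's care is to confirm that the harmonics appearing in \eqref{eq:4.2.202302} are mutually orthogonal over the full period $[-m\pi,m\pi]$ — equivalently, that $p$ is honestly $2m\pi$-periodic, which holds because $X_0$ is a closed curve of rotation number $m$ — so that term-by-term matching of coefficients is legitimate.
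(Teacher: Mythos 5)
Your proposal is correct and follows essentially the same route as the paper: the paper derives the identity \eqref{eq:4.2.202302} and then states Proposition \ref{pro:4.1.202302} as an immediate consequence ("Hence, one has the following proposition"), exactly as you do, with your appeal to uniqueness of Fourier coefficients and the periodicity of $p$ merely making explicit the step the paper leaves implicit.
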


\vskip.3cm
\begin{definition}\label{def:4.2.202302}
\textup{Let $X_0$ be a plane closed curve with the rotation number $m$.
If it is invariant under the rotation of the angle $\frac{2m\pi}{k}$,
then it is called $k$-\emph{symmetric}.}
\end{definition}

Moreover, we can prove the following proposition.
\begin{proposition}\label{pro:4.3.202302}
Let $X_0(\theta)$ be a locally convex curve with the rotation number $m$. If $X_0(\theta)$ is $k$-symmetric, then
\begin{eqnarray*}
p(\theta)=\frac{a_0}{2}+\sum\limits_{l=1}^{\infty}\left(a_{kl}\cos\frac{kl\theta}{m}+b_{kl}\sin\frac{kl\theta}{m}\right).
\end{eqnarray*}
\end{proposition}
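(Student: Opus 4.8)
The plan is to reduce the geometric hypothesis of $k$-symmetry to a single periodicity identity for the support function, and then read off the conclusion from the Fourier expansion by matching coefficients. The analytic heart is entirely parallel to the computation leading to \eqref{eq:4.2.202302}; the geometric translation is where the care is needed.

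First I would make precise what a rotation of the plane does to the support function. Using the reconstruction formula \eqref{eq:2.13.202302}, $X(\theta)=\frac{\partial p}{\partial\theta}T(\theta)-p\,N(\theta)$, together with the equivariance $R_cT(\theta)=T(\theta+c)$ and $R_cN(\theta)=N(\theta+c)$ of the Frenet frame under the rotation $R_c$ of the plane by angle $c$. Setting $c=\frac{2m\pi}{k}$ and $\hat p(\theta):=p(\theta+c)$, a direct computation shows that the curve reconstructed from $\hat p$ equals $R_{-c}X(\cdot+c)$. Since the reconstruction formula sets up a bijection between support functions and tangent-angle-parametrized locally convex curves, $\hat p=p$ holds if and only if $X(\theta+c)=R_cX(\theta)$ for all $\theta$; that is, shifting the tangent-angle parameter by $\frac{2m\pi}{k}$ coincides with rotating the curve by $\frac{2m\pi}{k}$. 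This is exactly what it means for $X_0$ to be $k$-symmetric in the sense of Definition \ref{def:4.2.202302}. Hence $k$-symmetry is equivalent to the periodicity $p\bigl(\theta+\frac{2m\pi}{k}\bigr)=p(\theta)$.

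Next I would substitute the Fourier expansion of $p$ into this periodicity identity. Writing $\phi_n=\frac{2n\pi}{k}$, the shift $\theta\mapsto\theta+\frac{2m\pi}{k}$ sends the $n$-th harmonic $(a_n,b_n)$ to its image under the rotation by $\phi_n$ in the coefficient plane, so matching the coefficients of $\cos\frac{n\theta}{m}$ and $\sin\frac{n\theta}{m}$ yields the linear system $a_n\cos\phi_n+b_n\sin\phi_n=a_n$ and $-a_n\sin\phi_n+b_n\cos\phi_n=b_n$. The coefficient matrix is a plane rotation by $\phi_n$, which has $1$ as an eigenvalue precisely when $\phi_n\equiv 0\pmod{2\pi}$, i.e. when $k\mid n$. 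Therefore $a_n=b_n=0$ whenever $k\nmid n$, while the harmonics with $n=kl$ are unconstrained, which is exactly the claimed expansion $p(\theta)=\frac{a_0}{2}+\sum_{l=1}^{\infty}\bigl(a_{kl}\cos\frac{kl\theta}{m}+b_{kl}\sin\frac{kl\theta}{m}\bigr)$. The same trigonometric sum identities displayed before \eqref{eq:4.2.202302} underlie this cancellation.

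I expect the only genuine obstacle to be the first step, the careful translation of the geometric symmetry into the analytic periodicity $p\bigl(\theta+\frac{2m\pi}{k}\bigr)=p(\theta)$. For winding number $m>1$ the support function lives on $[0,2m\pi)$ rather than $[0,2\pi)$, so distinct parameter values differing by a multiple of $2\pi$ share the same geometric normal direction but are different points of the curve; consequently the rotation by $\frac{2m\pi}{k}$ must be read as the parameter shift realized by a plane rotation, and one must verify via \eqref{eq:2.13.202302} that this is tracked correctly. Once that equivalence is established, the remainder is the routine coefficient matching above.
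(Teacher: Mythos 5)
Your proposal is correct and follows essentially the same route as the paper: translate $k$-symmetry into the periodicity $p\bigl(\theta+\frac{2m\pi}{k}\bigr)=p(\theta)$, expand in the Fourier series, and match coefficients to force $a_n=b_n=0$ for $k\nmid n$. The only difference is one of rigor, not of method: you justify the geometric-to-analytic translation via \eqref{eq:2.13.202302} and phrase the coefficient matching as a rotation fixed-point argument, both of which the paper simply asserts or leaves to the reader.
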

\begin{proof}
By the Fourier expansion of the support function $p$, one obtains
\begin{equation*}
\begin{aligned}
p(\theta+\frac{2m\pi}{k})
=&\frac{a_0}{2}+\sum\limits_{n=1}^{\infty}\left(a_n\cos\left(\frac{n\theta}{m}+\frac{2n\pi}{k}\right)+b_n\sin\left(\frac{n\theta}{m}+\frac{2n\pi}{k}\right)\right)
\\
=&\frac{a_0}{2}+\sum\limits_{n=1}^{\infty}\cos\frac{n\theta}{m}\left(a_n\cos\frac{2n\pi}{k}+b_n\sin\frac{2n\pi}{k}\right)\\
+&\sum\limits_{n=1}^{\infty}\sin\frac{n\theta}{m}\left(b_n\cos\frac{2n\pi}{k}-a_n\sin\frac{2n\pi}{k}\right).\\
\end{aligned}
\end{equation*}
Since $X_0$ is $k$-symmetric, $p(\theta)=p(\theta+\frac{2m\pi}{k})$ holds for every $\theta\in [0, 2m\pi]$.
A comparism of the coefficients in the Fourier expansion of $p(\theta)$ and $p(\theta+\frac{2m\pi}{k})$ finishes the proof.
\end{proof}
Combining the propositions $\eqref{pro:4.1.202302}$ and $\eqref{pro:4.3.202302}$, one gets
\begin{proposition}\label{pro:4.4.202302}
Let $X_0$ be a $k$-symmetric, locally convex plane curve with the rotation number $m$. Then $X_0$ is of constant $k$-order width
if and only if it is an $m$-fold circle.
\end{proposition}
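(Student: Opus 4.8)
The plan is to reduce the whole statement to the Fourier coefficients of the support function $p$ and then simply intersect the two descriptions already provided by Propositions \ref{pro:4.1.202302} and \ref{pro:4.3.202302}. First I would treat the forward implication. Suppose $X_0$ is both $k$-symmetric and of constant $k$-order width. By Proposition \ref{pro:4.3.202302}, $k$-symmetry forces every Fourier mode of $p$ whose index is not a multiple of $k$ to vanish, so that only the coefficients $a_0$ and $a_{kl}, b_{kl}$ (for $l\ge 1$) can survive. By Proposition \ref{pro:4.1.202302}, constant $k$-order width forces precisely the coefficients $a_{kl}, b_{kl}$ (for $l\ge 1$) to vanish. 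Intersecting the two conclusions leaves only the constant term, namely $p(\theta)\equiv a_0/2$.

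The second step is to translate ``$p$ constant'' into ``$m$-fold circle''. Since $\rho=\frac{\partial^2 p}{\partial\theta^2}+p$ by \eqref{eq:2.7.202302}, a constant support function yields a constant radius of curvature $\rho\equiv a_0/2>0$. A closed, locally convex curve whose radius of curvature is constant and whose tangent angle ranges over $[0,2m\pi]$ is exactly a circle of radius $a_0/2$ traversed $m$ times, i.e. an $m$-fold circle. This finishes the direction ``constant $k$-order width $\Rightarrow$ $m$-fold circle''.

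For the converse I would argue directly. If $X_0$ is an $m$-fold circle, its radius of curvature is constant, so after identifying its center the support function is constant; then each summand $p(\theta+\tfrac{2im\pi}{k})$ in the definition of $w_k$ equals that same constant and hence $w_k$ is constant. Equivalently, in the spirit of the computation leading to \eqref{eq:4.2.202302}, the support function of a circle carries only the zeroth mode together with possible first-harmonic (translation) terms at index $n=m$, and the $k$-fold sum defining $w_k$ collapses to $w_k\equiv k a_0/2$ by the same root-of-unity cancellations $\sum_{i=0}^{k-1}\cos(2in\pi/k)$ and $\sum_{i=0}^{k-1}\sin(2in\pi/k)$ already invoked for \eqref{eq:4.2.202302}.

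The step I expect to require the most care is the bookkeeping of the first-harmonic translation modes (index $n=m$), because an $m$-fold circle need not be centered at the origin and its support function then genuinely carries $a_m\cos\tfrac{m\theta}{m}+b_m\sin\tfrac{m\theta}{m}$. One must verify that these terms are compatible with $k$-symmetry and that summing over the $k$ shifts $\theta+\tfrac{2im\pi}{k}$ annihilates them through the cancellation identities used for \eqref{eq:4.2.202302}; correspondingly, in the forward direction Proposition \ref{pro:4.1.202302} must be applied to the \emph{full} index set, including $n=m$, so that the translation modes are also eliminated and $p$ is left genuinely constant rather than merely constant up to a rigid translation.
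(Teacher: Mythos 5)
Your forward direction is essentially the paper's own proof---the paper's argument is nothing more than intersecting the Fourier descriptions of Propositions \ref{pro:4.1.202302} and \ref{pro:4.3.202302}---and your completion of it (constant $p$ gives constant $\rho$ by \eqref{eq:2.7.202302}, hence an $m$-fold circle centered at the origin) is correct. Note that this half works for every pair $(k,m)$: each mode of $p$, including the translation mode $n=m$ you worried about, is killed by one of the two propositions, since $n=m$ is either a multiple of $k$ (killed by Proposition \ref{pro:4.1.202302}) or not (killed by Proposition \ref{pro:4.3.202302}).

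The genuine gap is in your converse, at exactly the step you flagged and left as ``one must verify.'' The cancellation you appeal to is \emph{conditional}: by the paper's identity preceding \eqref{eq:4.2.202302}, the sum $\sum_{i=0}^{k-1}\cos(2im\pi/k)$ equals $0$ when $k\nmid m$ but equals $k$ when $k\mid m$, so the translation modes of an off-center $m$-fold circle are annihilated in $w_k$ only if $k\nmid m$. If $k\mid m$, write $p(\theta)=r+c_1\sin\theta-c_2\cos\theta$ for the off-center circle; every shift $2im\pi/k$ is then a multiple of $2\pi$, so $w_k=kp$ is non-constant, while the curve is still $k$-symmetric in the sense actually used in the proof of Proposition \ref{pro:4.3.202302} (the condition $p(\theta)=p(\theta+\frac{2m\pi}{k})$ holds trivially because this $p$ has period $2\pi$). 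Hence when $k\mid m$ the verification you call for cannot be completed: the equivalence itself fails for off-center circles, a degenerate case the paper silently ignores (the defining rotation is then by a full multiple of $2\pi$). In the meaningful case $k\nmid m$ your argument does close, but there is a cleaner route that uses the standing $k$-symmetry hypothesis---which your converse never invokes: by Proposition \ref{pro:4.3.202302}, $k$-symmetry forbids all modes $n$ not divisible by $k$, in particular the translation mode $n=m$, so the $m$-fold circle must be centered at the origin, $p\equiv r$ is genuinely constant, and $w_k\equiv kr$ follows with no cancellation computation at all.
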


Now we turn to the proof of the remaining part of Theorem \ref{thm:1.1.202302}.

\begin{theorem}\label{thm:4.5.202302}
The evolving curve of the flow \eqref{eq:2.1.202302} converges to a locally convex curve of constant $k$-order width.
\end{theorem}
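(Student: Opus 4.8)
The plan is to reduce the convergence of the curve to the convergence of a single scalar, the average $\bar w_k(t):=\frac{1}{2m\pi}\int_0^{2m\pi} w_k(\theta,t)\,d\theta$, and to pin down its limit using the conserved elastic energy. First I would record the consequences of the gradient estimates. Lemma \ref{lem:3.1.202302} shows that every spatial derivative $\partial_\theta^i\rho_k$ with $i\geq 1$ decays like $e^{-kt}$, and the argument indicated in the proof of Theorem \ref{thm:3.5.202302} gives the same exponential decay for $\partial_\theta^i w_k$ with $i\geq 1$. Writing $\tilde\rho_k:=\rho_k-\bar\rho_k$ and $\tilde w_k:=w_k-\bar w_k$ for the oscillating (zero-mean) parts, the bound $\|\tilde w_k(\cdot,t)\|_\infty\leq \int_0^{2m\pi}|\partial_\theta w_k|\,d\theta\leq Ce^{-kt}$ shows that $\tilde\rho_k$ and $\tilde w_k$ tend to $0$ uniformly, together with all their derivatives. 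Thus $\rho_k$ and $w_k$ become spatially constant as $t\to\infty$, and it only remains to control their $\theta$-independent averages.

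Next I would exploit the two conservation laws. Relation \eqref{eq:2.16.202302} gives $\rho(\theta,t)=\psi(\theta)+\frac1k\rho_k(\theta,t)$ with the fixed function $\psi(\theta):=\rho(\theta,0)-\frac1k\rho_k(\theta,0)$, while the analogous identity for $p$ and $w_k$ noted in Remark \ref{remk:2.3.202302} gives $p(\theta,t)=p(\theta,0)+\frac1k\bigl(w_k(\theta,t)-w_k(\theta,0)\bigr)$. Averaging \eqref{eq:2.8.202302} over $[0,2m\pi]$ shows $\bar\rho_k=\bar w_k$, and Lemma \ref{lem:3.3.202302} bounds $\rho$, hence $\bar\rho_k$, uniformly in $t$. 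The crux is to prove that $\bar\rho_k(t)$ actually converges. Here I would invoke the conserved energy $E=\int_0^{2m\pi}\rho^{-1}\,d\theta$ from Lemma \ref{lem:3.2.202302}: substituting the expression for $\rho$ yields the identity $\int_0^{2m\pi}\bigl(\psi(\theta)+\tfrac1k\bar\rho_k(t)+\tfrac1k\tilde\rho_k(\theta,t)\bigr)^{-1}d\theta\equiv E$ for every $t$. Since the integrand is bounded below away from zero by Lemma \ref{lem:3.3.202302} and $\tilde\rho_k\to 0$ uniformly, any subsequential limit $c$ of the bounded family $\bar\rho_k(t)$ must satisfy $\int_0^{2m\pi}(\psi(\theta)+c/k)^{-1}\,d\theta=E$; because $c\mapsto\int_0^{2m\pi}(\psi+c/k)^{-1}\,d\theta$ is strictly decreasing on the admissible range, this equation has a unique root $c^\ast$, and therefore $\bar\rho_k(t)\to c^\ast$.

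Finally I would assemble the limit. From $\bar w_k=\bar\rho_k\to c^\ast$ and $\tilde w_k\to 0$ it follows that $w_k(\theta,t)\to c^\ast$ uniformly, i.e. the limiting curve has constant $k$-order width. The relation $p(\theta,t)=p(\theta,0)+\frac1k\bigl(w_k(\theta,t)-w_k(\theta,0)\bigr)$ then forces $p(\cdot,t)$ to converge, and differentiating it in $\theta$ together with the exponential decay of $\partial_\theta^i w_k$ for $i\geq 1$ upgrades this to smooth convergence of $p$, hence of $X(\cdot,t)=p_\theta T-pN$. The limiting curvature radius $\rho^\infty=\psi+c^\ast/k\geq m_0>0$ stays positive, so the limiting curve is genuinely locally convex, which finishes the statement.

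The main obstacle is the middle step: the gradient estimates only kill the oscillation of $\rho_k$ and $w_k$ and say nothing about their averages, so without an extra ingredient one could not exclude the averages drifting and the curve failing to converge. The conserved elastic energy — the very reason the nonlocal term $f(t)$ was built into \eqref{eq:1.2.202302} — supplies exactly the scalar constraint needed, and the strict monotonicity of $c\mapsto\int_0^{2m\pi}(\psi+c/k)^{-1}\,d\theta$ converts that constraint into convergence of a single number. Justifying the passage to the limit under the integral sign, using the uniform lower bound on $\rho$ and the uniform smallness of $\tilde\rho_k$, is the only genuinely analytic point and is where I would be most careful.
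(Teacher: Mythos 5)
Your proof is correct, and it takes a genuinely different route from the paper's. The paper proves Theorem \ref{thm:4.5.202302} with the Fourier machinery of Section 4: it expands $w_k$ in a Fourier series, derives the ODE system \eqref{eq:4.6.202302} for the coefficients, obtains the explicit representations \eqref{eq:4.7.202302} and \eqref{eq:4.9.202302} with exponential rates, and then argues by compactness --- by \eqref{eq:4.9.202302} and \eqref{eq:2.7.202302} any two Arzel\`{a}--Ascoli subsequential limits of $\rho(\cdot,t)$ differ by a constant $c_0$, and conservation of the elastic energy forces $c_0=0$; convergence of $\rho_k$, $w_k$, $p$ and finally of the curve follows. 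You avoid both the Fourier expansion and the compactness argument: you split $\rho_k$ and $w_k$ into mean plus oscillation, kill the oscillation using the gradient decay of Lemma \ref{lem:3.1.202302} (and its $w_k$-analogue indicated in the proof of Theorem \ref{thm:3.5.202302}), and reduce the whole problem to the single scalar $\bar\rho_k(t)=\bar w_k(t)$ through the conservation law \eqref{eq:2.16.202302}; the limit of that scalar is then pinned down by energy conservation plus the strict monotonicity of $c\mapsto\int_0^{2m\pi}\left(\psi(\theta)+c/k\right)^{-1}d\theta$. Both arguments rest on the same two pillars --- exponential decay of the spatial derivatives and the conserved elastic energy --- and your monotonicity step is precisely what is implicit in the paper's one-line claim that ``the constant $c_0$ has to be $0$''; your version makes it explicit and is self-contained within the material of Sections 2--3. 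What the paper's heavier route buys is the explicit formulas \eqref{eq:4.7.202302} and \eqref{eq:4.9.202302}, which are reused in Theorem \ref{thm:4.6.202302} to decide exactly when the limit is an $m$-fold circle; your argument proves Theorem \ref{thm:4.5.202302} but would not by itself yield that characterization. A small extra merit of your write-up is that you verify explicitly that the limiting curve is locally convex ($\rho^{\infty}=\psi+c^{\ast}/k\geq m_0>0$ by Lemma \ref{lem:3.3.202302}), a point the paper leaves implicit.
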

\begin{proof}
On one hand, from \eqref{eq:4.2.202302} we get the evolving equation of $w_k$, namely
\begin{equation}\label{eq:4.3.202302}
\frac{\partial w_k}{\partial t}(\theta, t)
=\frac{k}{2}a_0'(t)+k\sum\limits_{n=1}^{\infty}\left(a_{nk}'(t)\cos\left(\frac{nk}{m}\theta\right)
+b_{nk}'(t)\sin\left(\frac{nk}{m}\theta\right)\right)
\end{equation}
and
\begin{equation*}
\frac{\partial w_k}{\partial \theta}(\theta, t)
=k\sum\limits_{n=1}^{\infty}\left(-\frac{nk}{m}a_{nk}(t)\sin\left(\frac{nk}{m}\theta\right)
+\frac{nk}{m}b_{nk}(t)\cos\left(\frac{nk}{m}\theta\right)\right).
\end{equation*}
Moreover,
\begin{equation}\label{eq:4.4.202302}
\frac{\partial^2w_k}{\partial \theta^2}(\theta, t)
=-k\sum\limits_{n=1}^{\infty}\frac{n^2k^2}{m^2}\left(a_{nk}(t)\cos\left(\frac{nk}{m}\theta\right)
+b_{nk}(t)\sin\left(\frac{nk}{m}\theta\right)\right).
\end{equation}
Substituting \eqref{eq:4.4.202302} into the evolution equation of $w_k$ (see (\ref{eq:2.10.202302})), one gets
\begin{equation}
\begin{aligned}
\frac{\partial w_k}{\partial t}(\theta, t)
=-&\frac{k^2}{2}a_0(t)-kf(t)\\
-&\sum\limits_{n=1}^{\infty}\left(\left(\frac{n^2k^4}{m^2}+k^2\right)a_{nk}(t)\cos\left(\frac{nk\theta}{m}\right)
+\left(\frac{n^2k^4}{m^2}+k^2\right)b_{nk}(t)\sin\left(\frac{nk\theta}{m}\right)\right). \label{eq:4.5.202302}
\end{aligned}
\end{equation}
Comparing the coefficients of the right sides in \eqref{eq:4.5.202302} and \eqref{eq:4.3.202302}, we have
\begin{equation}\label{eq:4.6.202302}
\left\{
\begin{aligned}
a_0'(t)=&-ka_0(t)-2f(t), \\
a_{nk}'(t)=&-\left(\frac{n^2k^3}{m^2}+k\right)a_{nk}(t), \\
b_{nk}'(t)=&-\left(\frac{n^2k^3}{m^2}+k\right)b_{nk}(t). \\
\end{aligned}
\right.
\end{equation}
Integrating the last two equations in \eqref{eq:4.6.202302} yields
\begin{eqnarray*}
a_{nk}(t)=a_{nk}(0)e^{-\frac{n^2k^3+m^2k}{m^2}t},
\quad
b_{nk}(t)=b_{nk}(0)e^{-\frac{n^2k^3+m^2k}{m^2}t}.
\end{eqnarray*}
Hence
\begin{equation}\label{eq:4.7.202302}
w_k(\theta, t)=\frac{k}{2}a_0(t)+k\sum\limits_{n=1}^{\infty}\left( a_{nk}(0)
\cos\left(\frac{nk\theta}{m}\right)+b_{nk}(0)\sin\left(\frac{nk\theta}{m}\right)\right)e^{-\frac{n^2k^3+m^2k}{m^2}t}.
\end{equation}
By the evolution equation of $p(\theta, t)$ and $w_k(\theta, t)$, one gets
\begin{eqnarray}\label{eq:4.8.202302}
\frac{\partial p}{\partial t}(\theta, t) = \frac{1}{k}\frac{\partial w_k}{\partial t}(\theta, t),
\end{eqnarray}
which implies that
\begin{equation*}
\begin{aligned}
p(\theta, t)=&p(\theta, 0)+\frac{1}{k}\left(w_k(\theta, t)-w_k(\theta, 0)\right)\\
=&\frac{1}{2}a_0(0)+\sum\limits_{n=1}^{\infty}\left( a_{n}(0)\cos \frac{n\theta}{m}+ b_{n}(0)\sin \frac{n\theta}{m}\right)\\
&+\frac{1}{2}a_0(t)+\sum\limits_{n=1}^{\infty}\left( a_{nk}(0)\cos \frac{nk\theta}{m}+ b_{nk}(0)\sin \frac{nk\theta}{m}\right)e^{-\frac{n^2k^3+m^2k}{m^2}t}\\
&-\frac{1}{2}a_0(0)-\sum\limits_{n=1}^{\infty}\left( a_{nk}(0)\cos \frac{nk\theta}{m}+ b_{nk}(0)\sin \frac{nk\theta}{m}\right).
\end{aligned}
\end{equation*}
That is,
\begin{equation}
\begin{aligned}\label{eq:4.9.202302}
p(\theta, t)
=&\frac{1}{2}a_0(t)+\sum\limits_{n=1}^{\infty}\left( a_{nk}(0)\cos \frac{nk\theta}{m}+ b_{nk}(0)\sin \frac{nk\theta}{m}\right)e^{-\frac{n^2k^3+m^2k}{m^2}t}\\
&+\sum\limits_{n\neq kl}^{\infty}\left( a_{n}(0)\cos \frac{n\theta}{m}+ b_{n}(0)\sin \frac{n\theta}{m}\right).
\end{aligned}
\end{equation}

On the other hand, it follows from (\ref{eq:3.2.202302}) and (\ref{eq:3.5.202302}) that $\rho(\cdot, t)$ is uniformly bounded and equicontinuous.
According to the well-known Arzel\`{a}-Ascoli Theorem, the function $\rho(\cdot, t)$ has a convergent subsequence.
Suppose there are two convergent subsequences $\{\rho(\cdot, t_i)\}$ and $\{\rho(\cdot, t_j)\}$ such that
\begin{eqnarray*}
\lim_{t_i \rightarrow +\infty} \rho(\theta, t_i) = \widetilde{\rho}(\theta),  ~~\lim_{t_j \rightarrow +\infty} \rho(\theta, t_j) = \widetilde{\eta}(\theta),
\end{eqnarray*}
where $\widetilde{\rho}$ and $\widetilde{\eta}$ are two positive functions.

By (\ref{eq:4.9.202302}) and the identity (\ref{eq:2.7.202302}), the two functions $\widetilde{\rho}$ and $\widetilde{\eta}$
differ by a constant, i.e., $\widetilde{\rho}(\theta)= \widetilde{\eta}(\theta)+c_0$ holds for all $\theta \in [0, 2m\pi]$.
Since the flow (\ref{eq:2.1.202302}) preserves the elastic energy $\int_0^{2m\pi} \frac{1}{\rho(\theta, t)} d\theta$, one has
\begin{eqnarray} \label{eq:4.10.202302}
\int_0^{2m\pi} \frac{1}{\widetilde{\rho}(\theta)} d\theta = \int_0^{2m\pi} \frac{1}{\widetilde{\eta}(\theta)} d\theta.
\end{eqnarray}
So the constant $c_0$ has to be 0. The radius of curvature $\rho(\theta, t)$ converges to a limiting function as $t\rightarrow +\infty$.
Since also the function $\rho_k(\theta, t)$ converges, we set
\begin{eqnarray}\label{eq:4.11.202302}
\lim_{t \rightarrow +\infty} \rho_k(\theta, t) = \widetilde{\rho}_k(\theta),
\end{eqnarray}
where $\widetilde{\rho}_k$ is a positive function. Furthermore, the estimate (\ref{eq:3.1.202302}) tells us that this function is a constant function.
By the evolution equation of the $k$-order width $w_k(\theta, t)$, its derivative $\frac{\partial^i w_k}{\partial \theta^i}$ satisfies
\begin{eqnarray*}
\frac{\partial}{\partial t}\left(\frac{\partial^i w_k}{\partial \theta^i}\right)
=k\frac{\partial^{i+2}w_k}{\partial\theta^{i+2}}-k\frac{\partial^i w_k}{\partial\theta^i}.
\end{eqnarray*}
Applying the same trick as in the proof of Lemma \ref{eq:3.1.202302}, one may show that $|\frac{\partial^i w_k}{\partial\theta^i}|^2$
decays exponentially. So $w_k(\theta, t)$ also converges to a constant as $t\rightarrow +\infty$. Using the relation (\ref{eq:2.8.202302}), one has
\begin{eqnarray}\label{eq:4.12.202302}
\lim_{t \rightarrow +\infty} w_k(\theta, t) = \widetilde{\rho}_k.
\end{eqnarray}

The equation \eqref{eq:4.8.202302} shows that the support function and the width function have the relation
\begin{eqnarray}\label{eq:4.13.202302}
p(\theta, t)=p(\theta, 0)+\frac{1}{k}\left(w_k(\theta, t)-w_k(\theta, 0)\right),
\end{eqnarray}
the limit (\ref{eq:4.12.202302}) implies that $p(\theta, t)$ converges as $t \rightarrow +\infty$. The equation (\ref{eq:2.13.202302}) implies that the evolving curve
of the flow (\ref{eq:2.1.202302}) also converges to a curve $X_\infty$ as time goes to infinity. Finally, since the limit (\ref{eq:4.12.202302}) says that the $k$-order width function
converges to a constant, the limiting curve $X_\infty$ has constant $k$-order width.
\end{proof}

\begin{theorem}\label{thm:4.6.202302}
If the initial curve $X_0$ is a $k$-symmetric, locally convex closed plane curve
with the rotation number $m$, then the evolving curve $X(\cdot, t)$ under the flow (\ref{eq:2.1.202302}) converges
to an $m$-fold circle, and vice versa.
\end{theorem}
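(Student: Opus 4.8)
The plan is to read the limiting support function off the explicit Fourier representation \eqref{eq:4.9.202302} and then translate the geometric hypotheses on $X_0$ and $X_\infty$ into statements about Fourier coefficients. First I would record that in \eqref{eq:4.9.202302} the modes of order $nk$ carry the decaying factor $e^{-\frac{n^2k^3+m^2k}{m^2}t}$, while the modes with $n\neq kl$ are frozen in time. Letting $t\to+\infty$ therefore gives
\begin{eqnarray*}
p_\infty(\theta) &=& \frac{1}{2}a_0(\infty)+\sum_{n\neq kl}^{\infty}\left(a_n(0)\cos\frac{n\theta}{m}+b_n(0)\sin\frac{n\theta}{m}\right),
\end{eqnarray*}
where $a_0(\infty)=\lim_{t\to+\infty}a_0(t)$ exists because $p(\cdot,t)$ converges by Theorem \ref{thm:4.5.202302}.

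For the forward implication, suppose $X_0$ is $k$-symmetric. Proposition \ref{pro:4.3.202302}, and more precisely the coefficient comparison in its proof (which is an equivalence), shows that the only nonzero Fourier modes of $p_0$ are the constant term and those of order $kl$, i.e. $a_n(0)=b_n(0)=0$ for every $n\neq kl$. Hence the frozen sum above vanishes and $p_\infty\equiv\frac{1}{2}a_0(\infty)$ is constant, so $X_\infty$ is a circle of radius $\frac{1}{2}a_0(\infty)$ traversed $m$ times. Equivalently, the same vanishing shows that $p(\cdot,t)$ retains only the constant and $kl$-modes for all $t$, so $X(\cdot,t)$ stays $k$-symmetric along the flow; its limit is then $k$-symmetric and, by Theorem \ref{thm:4.5.202302}, of constant $k$-order width, whence Proposition \ref{pro:4.4.202302} identifies $X_\infty$ as an $m$-fold circle.

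For the converse, assume $X_\infty$ is an $m$-fold circle, so that its radius of curvature $\rho_\infty=p_\infty+p_\infty''$ is constant. Substituting the displayed expression for $p_\infty$ and using the orthogonality of the trigonometric system over $[0,2m\pi]$, I would match coefficients to force $\left(1-\frac{n^2}{m^2}\right)a_n(0)=\left(1-\frac{n^2}{m^2}\right)b_n(0)=0$ for all $n\neq kl$. This kills every non-$k$-multiple harmonic, so $p_0$ contains only the constant and $kl$-modes, and Proposition \ref{pro:4.3.202302} returns the $k$-symmetry of $X_0$.

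The delicate step is the first harmonic $n=m$, which lies in the kernel of $p\mapsto p+p''$ and hence encodes a pure translation invisible to the curvature. When $k\mid m$ this mode is itself a $kl$-mode and is damped out in \eqref{eq:4.9.202302}, so the limit is automatically centered; when $k\nmid m$ it is frozen and must be argued away, which amounts to fixing the meaning of ``$m$-fold circle'' as a circle centered at the origin (equivalently, a constant limiting support function). Pinning down this identification, so that the converse reads off a genuinely constant $p_\infty$ rather than one carrying a leftover first harmonic, is where I expect the argument to need the most care.
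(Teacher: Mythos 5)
Your proposal is correct and rests on the same machinery as the paper: the explicit Fourier representation \eqref{eq:4.9.202302} together with Proposition \ref{pro:4.3.202302}. In the forward direction you kill the frozen (non-$kl$) modes directly via Proposition \ref{pro:4.3.202302} and read off a constant $p_\infty$ from \eqref{eq:4.9.202302}; the paper instead notes from \eqref{eq:4.13.202302} that $p-\frac{1}{k}w_k$ is time-invariant, shows it vanishes for a $k$-symmetric initial curve, and then invokes the convergence of $w_k$ to a constant from Theorem \ref{thm:4.5.202302} --- the same ingredients arranged differently, and both are valid. The one real divergence is in the converse: you pass through constancy of $\rho_\infty=p_\infty+p_\infty''$, which, as you correctly flag, cannot see the first harmonic $n=m$ (the translation mode lies in the kernel of $p\mapsto p+p''$), whereas the paper matches coefficients on $p_\infty$ itself, having built the resolution of exactly this ambiguity into its hypotheses: it assumes the limiting $m$-fold circle has \emph{center $O$} (equivalently, constant limiting support function) and concludes that $X_0$ is $k$-symmetric \emph{with respect to the origin}. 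So the ``delicate step'' you isolate is genuine, and your proposed fix --- pinning down ``$m$-fold circle'' as one with constant support function --- is precisely the convention the paper adopts, albeit silently; once that convention is in place, your coefficient matching can be done on $p_\infty$ directly and the $n=m$ mode dies with all the other frozen modes, completing the converse exactly as in the paper.
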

\begin{proof}
From \eqref{eq:4.13.202302}, we have that
$p(\theta, t)-\frac{1}{k}w_k(\theta, t)$
is constant independent of $t$, that is,
\begin{eqnarray*}
p(\theta, t)-\frac{1}{k}w_k(\theta, t)=p(\theta, 0)-\frac{1}{k}w_k(\theta, 0).
\end{eqnarray*}

Suppose that the initial curve $X_0$ is $k$-symmetric. From Proposition \ref{pro:4.3.202302}
and \eqref{eq:4.2.202302}, one gets
\begin{eqnarray*}
p(\theta, 0)-\frac{1}{k}w_k(\theta, 0)=0.
\end{eqnarray*}
Hence, $p(\theta, t)=\frac{1}{k}w_k(\theta, t)$,
which together with Theorem \ref{thm:4.5.202302} gives us that $\lim\limits_{t\rightarrow\infty}p(\theta, t)$ is constant,
that is, the limiting curve is an $m$-fold circle.

Conversely, if the flow \eqref{eq:2.1.202302} has a global solution on $[0, 2m\pi]\times [0, \infty)$
and the limiting curve is an $m$-fold circle with center $O$, then $\lim\limits_{t\rightarrow\infty}p(\theta, t)$ is a constant and \eqref{eq:4.9.202302} implies
\begin{eqnarray*}
a_{n}(0)=b_{n}(0)=0, n\neq kl.
\end{eqnarray*}
Therefore, we get
\begin{eqnarray*}
p(\theta, t)=\frac{1}{2}a_0(t)
+\sum\limits_{n=1}^{\infty}\left( a_{nk}(0)\cos \frac{nk\theta}{m}+ b_{nk}(0)\sin \frac{nk\theta}{m}\right)e^{-\frac{n^2k^3+m^2k}{m^2}t},
\end{eqnarray*}
which implies
\begin{eqnarray*}
p(\theta, 0)=p(\theta+\frac{2m\pi}{k}, 0)= \cdots =p(\theta+\frac{2m(k-1)\pi}{k}, 0).
\end{eqnarray*}
In this case, $X_0$ is $k$-symmetric with respect to the origin.
\end{proof}

The combination of Lemma \ref{lem:2.2.201909}, Theorem \ref{thm:3.5.202302}, Theorem \ref{thm:4.5.202302} and Theorem \ref{thm:4.6.202302}
yields the proof of the main result given in Theorem \ref{thm:1.1.202302}.

~\\
\textbf{Acknowledgments}
Deyan Zhang is supported by University Natural Science Research Project of Anhui Province (No. 2022AH040067).

{\bf Laiyuan Gao}

School of Mathematics and Statistics, Jiangsu Normal University

No.101, Shanghai Road, Xuzhou 221116, Jiangsu, P. R. China

Email: lygao@jsnu.edu.cn\\

{\bf Horst Martini}

Faculty of Mathematics, University of Technology Chemnitz

09107 Chemnitz, Germany

Email: horst.martini@mathematik.tu-chemnitz.de\\

{\bf Deyan Zhang}

School of Mathematical Sciences, Huaibei Normal University

No.100, Dongshan Road, Huaibei 235000, Anhui, P. R. China

Email: zhangdy8005@126.com

\end{document}